\documentclass[12pt,letterpaper]{article}
\usepackage{t1enc}
\usepackage[latin1]{inputenc}
\usepackage[english]{babel}
\usepackage{graphics}
\usepackage{color}
\usepackage{amssymb,amsmath,amsthm}

\newcommand\R{\mathbb{R}}
\newcommand\N{\mathbb{N}}
  
 \newtheorem{theo}{Theorem}

 \newtheorem{lem}{Lemma}

 \begin{document}
\title{Isochronicity conditions for some real polynomial systems }

\author{Islam Boussaada
\footnote {LMRS, UMR 6085, Universit\'e de Rouen. {\small
islam.boussaada@etu.univ-rouen.fr}  } .}
\maketitle {}

\bigskip

\begin{abstract}
{\small{
This paper focuses on isochronicity of linear center perturbed by a polynomial.
Isochronicity of a linear center perturbed by a degree four and degree five polynomials is  studied, several new isochronous centers are found.
For homogeneous isochronous perturbations, a first integral and a linearizing change of coordinates are presented.

Moreover, a family of Abel polynomial systems is also considered. By investigations until degree $10$ we prove the existence of a unique isochronous center.

These results are established using a computer implementation based on Urabe theorem.
\footnote{{\it Key Words and phrases:} \ period function, monotonicity, isochronicity, Urabe function, linearizability,
  center, polynomial systems, first integral.\\
2000 Mathematics Subject Classification  \ 34C15, 34C25, 34C37}}}\\
 
\end{abstract}
\section{Introduction }

We consider the planar dynamical system,
\begin{equation}\label{1}
\frac{dx}{dt}=\dot x=X(x,y),\qquad \frac{dy}{dt}=\dot y=Y(x,y),
\end{equation}
where  $(x,y)$ belongs to an  open connected subset  $U\subset {\R}^2$, $X,Y\in{C}^k(U,\R)$, and  $k\geq1$.
An isolated
singular point $p\in U$ of~\eqref{1} is a center
 if and only if there exists a punctured neighborhood $V\subset U$ of $p$  such that
  every orbit in $V$ is a cycle surrounding $p$.
  
  The {\it{period annulus}} of $p$, denoted $\Gamma_p$ is the largest connected
  neighborhood covered by cycles surrounding $p$. 
The {\it{period function}}  $T:\Gamma_p\longrightarrow\R$ associate to every point
$(x,y)\in\Gamma_p$
the minimal period of the cycle $\gamma_{(x,y)}$ containing $(x,y)$.

We say that a center $p$ is {\it{isochronous}} if the period function is constant for all cycles  contained in $\Gamma_p$.
The simplest example is the linear center at the origin $O=(0,0)$ given by the system $ \dot x=-y,\; \dot y=x$.

For a cycle $\gamma\in\Gamma_p$ we denote by $C(\gamma)\subset U$ the open subset bounded by $\gamma$.  
 We say that the period function is strictly increasing (decreasing) iff 
 $T(\gamma_1)<T(\gamma_2),\;
  (T(\gamma_1)\geq T(\gamma_2)$) for all  $\gamma_1$ and $\gamma_2$ such that $C(\gamma_1)\subset C(\gamma_2)$.
 

An overview  of 
J.Chavarriga and M.Sabatini  \cite{CS} present the recent results concerning the problem of the isochronicity, see also \cite{C1,C2}.

The main purpose of this paper is the study of the Li\'enard type equation
\begin{equation}\label{L2} 
\ddot x+f(x){\dot x}^2+g(x)=0
\end{equation} 
with rational $f$ and $g$, or equivalently the study of its associated two dimentional (planar) system
\begin{equation}\label{L2P}
\left.\begin{split} \dot x &= y \\ \dot y &= -g(x) - f(x) y
 \end{split}\right\} 
   \end{equation} 

The Li\'enard type equation~\eqref{L2} appear for the first time in M.Sabatini paper \cite{S2}, when  the sufficient conditions of the isochronicity of the origin $O$  for the system~\eqref{L2P}  with $f$ and $g$ of classe $C^1$ are given.  

In the analytic case, the necessary and sufficient conditions for isochronicity are given  by  A.R.Chouikha in \cite{C3},
where the particular case of system  system~\eqref{L2P}
\begin{equation} \left.\begin{split}\dot x &= - y + b x^2y\\
 \dot y &= x + a_1x^2 + a_3y^2 + a_4x^3 + a_6xy^2 \end{split}  \right\} 
 \end{equation}
 is studied. In this system as well as in all other considered systems, all parameters are real.
 All the values of  the parameters for which the above system has the isochronous center at the origin are found.
 
In \cite{CRC} a similar result was obtained for more general system 
\begin{equation}\label{RCC}  \left. \begin{split} \dot x &= - y +a x y + b x^2y\\
 \dot y &= x + a_1x^2 + a_3y^2 + a_4x^3 + a_6xy^2 \end{split}\right\} 
 \end{equation}
 

The aim of this paper is  to extend investigations made in \cite{C3,CRC} for systems  with  higher order perturbations of the linear center $ \dot x=-y,\; \dot y=x$.  
We investigate the practical applicability and the limitations of the method developed in the cited papers for more complicated systems.   

 First let us consider the following particular case of~\eqref{L2P} which is more general then~\eqref{RCC}
 \begin{equation}\label{C_4} 
 \left. \begin{split}\dot x&= - y+ b_{1,1}yx+ b_{2,1}yx^2 + b_{3,1}yx^3\\
 \dot y&= x + a_{2,0}x^2 + a_{3,0}x^3+ a_{0,2}y^2 + a_{1,2}xy^2 + a_{2,2}x^2y^2+ a_{4,0}x^4 \end{split}\right\} 
\end{equation}
Because of computational complexity, we select for investigation two sub-families (first one $ b_{1,1}=a_{3,0}=0$, second one $b_{1,1}= b_{2,1}=0$) of the above system which have the codimension two in the parameter space.

In section~3, for the selected families we found all the parameters values for which the origin is an isochronous center. 
Thanks to this, among other, we found  three new additional isochronous cases of linear center perturbed by homogeneous polynomial,  which are not covered by the classification established by Chavarriga, Gin\'e and Garcia  in \cite{CGG}.
 For these three isochronous centers we give the explicit form of the first integral and the linearizing change of coordinates.
 
In the Section~4, an other particular case of the system~\eqref{L2P} is considered, namely the fifth degree homogeneous polynomial perturbation of linear center
\begin{equation}\label{CB5}\left.\begin{split}\dot x &= - y +ayx^4\\
 \dot y &= x +bx^3y^2+cx^5 \end{split}\right\} 
\end{equation}
We found all the parameters values for which the center at the origin is isochronous (two families).
The explicit form of the first integral and the linearizing change of coordinates are given for them. 
These systems are not contained in  the Chavarriga  et al. classification in \cite{CGG1}.

In the last section, we investigate the following particular Abel polynomial system 
\begin{equation}\label{AbI}
\left. \begin{split}
\dot x&=-y\\ \displaystyle\dot y&=\displaystyle\sum_{k=0}^{n} P_k (x){y}^k,
    \end{split}\right\} 
    \end{equation}
where  $P_k(x):= a_k x$  and $a_k \in \R$, for $k =0,\ldots, n$.  This Abel system 
 is also a particular case of~\eqref{L2P}, and  hence we can use the C-algorithm to investigate its isochronicity. 
 
 Volokitin and Ivanov \cite{VI} proved that for $n=3$ among systems of the form~\eqref{AbI}
  with arbitrary polynomials $P_k(x)\in\R[x]$, there is only one  family of isochronous centers. For $P_k(x)=a_kx$, this family reduces to exactly one system. Namely, the following one 
\begin{equation}\label{Ab31}
\left. \begin{split}
\dot x&=-y\\ \dot y&=x(1+y)^3
    \end{split}\right\} 
    \end{equation}
In the cited paper  Volokitin and Ivanov formulated the problem, which restricted to Abel equations of the form~\eqref{AbI}, can be stated as follows. 
Do exist among systems~\eqref{AbI} isochronous ones with $n\geq 4$? We give a partial negative answer to this question showing that for $4\leq n\leq 9$  among systems~\eqref{AbI} there is no an isochronous one.     

\section{Efficient algorithm for computing \\ 
 necessary conditions of isochronicity}
\subsection{About isochronous centers}

We collect now the results concerning Li\'enard type equation~\eqref{L2} (or its associated planar system~\eqref{L2P}) which will be used later.

Consider the  Li\'enard type equation 
\begin{equation*}
 \ddot x+f(x){\dot x}^2+g(x)=0, 
\end{equation*}
%
where $f$ and $g$ are $C^1$ class functions defined in a neighborhood $J$ of $0 \in \R$.  Let us  define the following  functions 
\begin{equation}
F(x):= \int_0^xf(s) ds, \quad \phi(x):= \int_0^x e^{F(s)} ds.
\end{equation}

When $xg(x)>0$ for $x\neq0$, define the function $X$  by
\begin{equation}
\frac {1}{2} X(x)^2 = \int_0^x g(s) e^{2F(s)} ds.
\end{equation}

\begin{theo}[H.Poincar\'e]
The planar system \eqref{1} with analytic data has an isochronous center at the origin if and only if for some analytic change of variables 
$u=u(x,y)=x+\dots,\; v=v(x,y)=y+\ldots$ the system \eqref{1} reduces to $\dot u=-kv,\;\dot v=ku$, where  $k\in\R,\;k\neq0$ and $\ldots$ denotes the higher order terms. 
\end{theo}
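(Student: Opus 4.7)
The plan is to handle the two directions separately, with essentially all the work in the ``only if'' direction. If an analytic change of variables $(u,v)=(x+\cdots,y+\cdots)$ conjugates~\eqref{1} to $\dot u=-kv$, $\dot v=ku$, then every orbit of the conjugated system near $O$ is a circle traversed in the constant period $2\pi/|k|$, and an analytic conjugation preserves orbits and their time-parametrization, so the origin is automatically an isochronous center of~\eqref{1}.

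For the converse, the idea is to average the nonlinear flow against the linear rotation. Since the origin is an analytic center, its linearization has purely imaginary non-zero eigenvalues $\pm ik$, and since the period $T$ is constant on the whole annulus, one must have $k=2\pi/T$. After a preliminary linear change of coordinates I would normalize so that the linear part of~\eqref{1} is $\dot x=-ky$, $\dot y=kx$. Let $\varphi_t$ denote the analytic flow of~\eqref{1}, which is defined for every $t\in\R$ on an open neighborhood of $O$ because all nearby orbits are closed with common period $T$, and let $R_\theta$ denote the Euclidean rotation by the angle $\theta$. Define
\begin{equation*}
\Psi(p)=\frac{1}{T}\int_0^T R_{-kt}\,\varphi_t(p)\,dt.
\end{equation*}
Three verifications then finish the proof. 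First, $\Psi$ is analytic, since $\varphi_t(p)$ depends analytically on $p$ uniformly in $t\in[0,T]$ by analytic dependence of ODE solutions on initial conditions. Second, after the substitution $u=t+s$ the $T$-periodicity of $t\mapsto R_{-kt}\varphi_t(p)$ yields $\Psi\circ\varphi_s=R_{ks}\circ\Psi$. Third, the linearization of $\varphi_t$ at $O$ is exactly $R_{kt}$, so the integrand at $O$ is the identity for every $t$ and hence $D\Psi(O)=\mathrm{Id}$. The analytic inverse function theorem then makes $\Psi$ a local analytic diffeomorphism, and differentiating the conjugacy at $s=0$ produces exactly $\dot u=-kv$, $\dot v=ku$ in the new coordinates $(u,v)=\Psi(x,y)$.

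The main obstacle I expect is the setup rather than the algebra: one must confirm that the flow extends analytically to all $t\in\R$ on a full open neighborhood of $O$ (not merely on a punctured one), and justify that the integral of an analytic family of maps is again analytic. Both follow from standard facts about analytic ODEs, but they are precisely what the brief statement of the theorem is hiding, and they are the reason analyticity of the data is essential: the same averaging recipe in the $C^\infty$ category would only produce a $C^\infty$ linearization, not an analytic one.
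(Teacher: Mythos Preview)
The paper does not actually prove this theorem: it states the result and refers the reader to \cite{MRT} for details. Your averaging construction---defining $\Psi(p)=\frac{1}{T}\int_0^T R_{-kt}\,\varphi_t(p)\,dt$ and using $T$-periodicity together with $kT=2\pi$ to obtain $\Psi\circ\varphi_s=R_{ks}\circ\Psi$---is the classical Bochner-type linearization argument and is correct, self-contained, and more informative than a bare citation. One point to tighten: the sentence ``since the origin is an analytic center, its linearization has purely imaginary non-zero eigenvalues $\pm ik$'' is not true as stated, because analytic centers with nilpotent or vanishing linear part do exist. The theorem as phrased in the paper, with the conjugacy of the form $u=x+\cdots$, $v=y+\cdots$, tacitly presupposes that the linear part of~\eqref{1} is already a non-trivial rotation (otherwise the ``only if'' direction would be false for nilpotent isochronous centers); you should state this as a hypothesis inherited from the shape of the conclusion rather than deduce it from the center property. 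With that adjustment, your three verifications---analyticity of $\Psi$, $D\Psi(O)=\mathrm{Id}$ via the linearized flow, and differentiation of the conjugacy at $s=0$---complete the argument cleanly.
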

For more details see~\cite{MRT}.
\begin{theo}[Sabatini,\cite{S2}]\label{SAB}
Let $f ,\, g \in C^1(J,\R)$. If $xg(x)>0$ for $x\neq0$, then the system~\eqref{L2P} has a center at the origin $O$.
When $f,\; g$ are analytic , this condition is also necessary.

When $f ,\, g \in C^1(J,\R)$, the first integral of the system~\eqref{L2P} is given by the formula
 \begin{equation}
 I(x,\dot x)=2\int_0^x g(s) e^{2F(s)} ds+(\dot x e^{F(x)})^2
 \end{equation}
\end{theo}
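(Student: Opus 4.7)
The plan is to treat the three assertions separately, with the explicit formula for $I$ serving as the technical pivot for both center criteria. I would first verify the first integral by direct differentiation along trajectories of \eqref{L2P}. The chain rule gives
\begin{equation*}
\frac{dI}{dt} = 2g(x)e^{2F(x)}\dot x + 2\dot x\, e^{2F(x)}\bigl(\ddot x + f(x)\dot x^2\bigr),
\end{equation*}
and the Li\'enard type equation \eqref{L2} collapses the parenthesized expression to $-g(x)$, producing the required cancellation. This step is routine.

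For the sufficient direction, I would use $xg(x) > 0$ on $J \setminus \{0\}$ to conclude that $g(s)e^{2F(s)}$ has the sign of $s$, so $X(x)^2 = 2\int_0^x g(s)e^{2F(s)}\,ds$ is strictly positive for $x \neq 0$. Then $I = X(x)^2 + (\dot x\, e^{F(x)})^2$ is a non-negative continuous first integral vanishing only at the origin. Since $y=0$ and $g(x)=0$ jointly force $x=0$ under our hypothesis, the origin is the unique equilibrium nearby, and for $c > 0$ small each level set $\{I = c\}$ is a compact curve surrounding $O$ on which no equilibria lie; invariance under the flow then forces this curve to be a single periodic orbit. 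Varying $c$ fills a punctured neighborhood of $O$ with cycles, yielding the center.

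For the converse in the analytic case, $I$ is analytic, and the existence of a punctured neighborhood foliated by cycles requires $O$ to be a strict local extremum of $I$; otherwise the zero level set would admit real analytic branches through $O$ partitioning a neighborhood into sectors where $I$ has opposite signs, incompatible with closed level curves. Restricting $I$ to $\dot x = 0$ then yields $\int_0^x g(s)e^{2F(s)}\,ds > 0$ for small $x \neq 0$; expanding $g(x) = a_k x^k + O(x^{k+1})$ with $a_k \neq 0$ (the case $g \equiv 0$ being incompatible with $O$ being isolated), the leading term $\tfrac{a_k}{k+1}x^{k+1}$ of the integral forces $k$ odd and $a_k > 0$, which is precisely $xg(x) > 0$. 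I expect this last part to be the delicate one: analyticity is used twice---first to classify the critical point of $I$ via convergent power series, and then to turn the integrated sign condition into a pointwise one on $g$---and both inferences genuinely fail in the pure $C^1$ regularity class, where one can engineer $g$ with $xg(x)$ changing sign on a set accumulating at $0$ while keeping the primitive positive.
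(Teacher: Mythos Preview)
The paper does not supply its own proof of this statement: Theorem~\ref{SAB} is quoted from Sabatini~\cite{S2} and used as a black box, so there is no in-paper argument to compare against. I can therefore only comment on the soundness of your proposal.

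Your verification that $I$ is a first integral and your sufficiency argument are correct and standard. For the converse under analyticity, the strategy is right but the justification you give for ``$O$ must be a strict local extremum of $I$'' has a soft spot: the dichotomy \emph{strict extremum versus zero-set branches separating sectors of opposite sign} is not exhaustive for real-analytic functions (e.g.\ $x^2y^2$ has non-isolated zero set yet never changes sign). A cleaner and fully rigorous route bypasses analytic-geometry considerations entirely: since $F(0)=0$ one has $I(0,y)=y^2>0$ for $y\neq 0$, and every cycle surrounding $O$ necessarily meets the $y$-axis, so $I>0$ on each cycle and hence on the whole punctured neighborhood foliated by cycles. This gives the strict minimum directly. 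From there your restriction to $y=0$ and the power-series expansion of $g$ finish the argument exactly as you wrote; analyticity is then only used to pass from positivity of the primitive $\int_0^x g(s)e^{2F(s)}\,ds$ to the pointwise sign condition $xg(x)>0$, which is the genuine place where $C^1$ regularity would not suffice, as you correctly note.
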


\begin{theo}[Chouikha,\cite{C3}]
\label{thmC}
Let $f$, $g$ be  functions analytic in a neighborhood $J$ of $0$, 
and $x g(x) > 0$ for $x \neq 0$.
Then  system~\eqref{L2P}  has an isochronous center at $O$ if and only if 
there exists an odd function $h$ which satisfies the following conditions 
 \begin{equation}
 \label{CRI}\frac {X(x)}{1+h(X(x))} = g(x) e^{F(x)},
\end{equation}
the function $\phi(x)$ satisfies 
 \begin{equation}
 \label{bb} \phi(x) = X(x) + \int_0^{X(x)} h(t) dt,
\end{equation}
 and $X(x)\phi (x) > 0$ for $x\neq 0$.

In particular, when $f$ and  $g$ are odd, then $O$ is an isochronous center if and only if  $g(x) = e^{-F(x)}\phi (x)$, or equivalently  $h= 0$.
\end{theo}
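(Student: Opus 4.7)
The plan is to reduce the Li\'enard type equation~\eqref{L2} to a conservative second--order ODE by the change of variable $u=\phi(x)$, and then invoke Urabe's classical isochronicity criterion for such equations. Since $\phi'(x)=e^{F(x)}$, one has $\dot u=e^{F(x)}\dot x$ and
\[
\ddot u=e^{F(x)}\bigl(\ddot x+f(x)\dot x^{2}\bigr)=-e^{F(x)}g(x).
\]
Writing $x=\phi^{-1}(u)$, this is a conservative equation $\ddot u+\Psi(u)=0$ whose potential is
\[
G(u):=\int_{0}^{u}\Psi(s)\,ds=\int_{0}^{x}g(\sigma)e^{2F(\sigma)}\,d\sigma=\tfrac{1}{2}X(x)^{2}.
\]
Because $\phi$ is an analytic diffeomorphism near $0$ that preserves the time variable $t$, orbits and their minimal periods correspond bijectively, so~\eqref{L2P} has an isochronous center at $O$ if and only if $\ddot u+G'(u)=0$ does; the hypothesis $xg(x)>0$ transfers to $uG'(u)>0$ for $u\neq 0$.

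Next I would invoke Urabe's theorem: under the preceding hypotheses on $G$, the origin is an isochronous center of $\ddot u+G'(u)=0$ if and only if there exists an \emph{odd} analytic function $h$, vanishing at $0$, such that
\[
G'(u)=\frac{U}{1+h(U)} \quad\Longleftrightarrow\quad u=U+\int_{0}^{U}h(t)\,dt,
\]
where the Urabe variable $U$ is defined by $U^{2}/2=G(u)$ together with $uU>0$. A direct computation gives $G'(u)=g(x)e^{F(x)}$; combined with $u=\phi(x)$ and $U=X(x)$, the two Urabe relations become exactly~\eqref{CRI} and~\eqref{bb}, while the sign condition $uU>0$ translates to $\phi(x)X(x)>0$ for $x\neq 0$.

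For the particular case of odd $f$ and $g$, the primitive $F$ is even, hence $e^{F}$ is even and $\phi$ is odd; similarly the integrand $g(s)e^{2F(s)}$ is odd, so $X$ is odd. Then $\phi(x)-X(x)$ is odd in $x$, whereas $\int_{0}^{X(x)}h(t)\,dt$ is even in $x$ (the integral of an odd $h$ is an even function of its upper limit, composed with the odd function $X$). Relation~\eqref{bb} forces these two to coincide, hence both vanish identically near $0$, so $h\equiv 0$. Condition~\eqref{bb} then collapses to $\phi=X$, and~\eqref{CRI} becomes $g(x)=e^{-F(x)}\phi(x)$. The only nontrivial ingredient is Urabe's theorem itself, whose proof (via an action--angle representation of $\ddot u+G'(u)=0$, or a direct comparison of period integrals expressed in the Urabe variable $U$) carries the substantive analytic content; once it is granted, the remainder of the argument is a routine change of coordinates together with the parity bookkeeping above.
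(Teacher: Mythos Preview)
The paper does not give its own proof of this statement: Theorem~\ref{thmC} is quoted from \cite{C3} as a known tool, with no argument supplied. So there is nothing in the present paper to compare your proposal against.

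That said, your outline is exactly the standard route used in \cite{C3}: the substitution $u=\phi(x)$ collapses~\eqref{L2} to the conservative equation $\ddot u+\Psi(u)=0$ with $\Psi(u)=g(x)e^{F(x)}$ and potential $G(u)=\tfrac12 X(x)^2$, and Urabe's criterion for such equations then reads off precisely as~\eqref{CRI}--\eqref{bb}. Your parity argument in the odd case is also the intended one; two small points to tighten. First, when you say ``the integrand $g(s)e^{2F(s)}$ is odd, so $X$ is odd'', what you actually get is that $X^2$ is even; concluding that $X$ itself is odd uses analyticity together with $X(0)=0$, $X'(0)\neq 0$ (equivalently, the sign convention $xX(x)>0$), which you should state. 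Second, you establish only the forward implication (isochronous $\Rightarrow h\equiv 0\Rightarrow g=e^{-F}\phi$); for the converse, note that $g=e^{-F}\phi$ gives $g\,e^{2F}=\phi\,\phi'$, hence $X^2=\phi^2$ and so $X=\phi$, which means $h\equiv 0$ satisfies~\eqref{CRI}--\eqref{bb} and the main part of the theorem yields isochronicity. With these two clarifications your argument is complete.
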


The function $h$ is called {\it{Urabe function}}. As a corollaries of the above theorem one has
\begin{theo}[Chouikha,\cite{C3}]\label{COR}
Let $f$, $g$ be  functions analytic in a neighborhood of $0\in\R$, and $x g(x) > 0$ for $x \neq 0$.
If $g'(x)+g(x)f(x)=1$ then the origin $O$ is isochronous center of system~\eqref{L2P} and its associated Urabe funtion $h=0$. 
\end{theo}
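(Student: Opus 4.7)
The plan is to apply Theorem~\ref{thmC} with the trial Urabe function $h \equiv 0$. Under this choice, condition~\eqref{CRI} reduces to $X(x) = g(x) e^{F(x)}$, condition~\eqref{bb} reduces to $\phi(x) = X(x)$, and the positivity requirement becomes $X(x)\phi(x) = \phi(x)^2 > 0$ for $x \neq 0$. Thus the whole theorem collapses to verifying the chain of identities
\begin{equation*}
\phi(x) \;=\; X(x) \;=\; g(x)\, e^{F(x)}.
\end{equation*}
Once these identities are established, isochronicity with $h=0$ follows immediately from Theorem~\ref{thmC}, so the proof is reduced to elementary calculus using the hypothesis $g'(x)+g(x)f(x)=1$.

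First I would prove the rightmost equality $\phi(x) = g(x) e^{F(x)}$. Differentiating the right side and using $F'(x)=f(x)$ together with the hypothesis yields
\begin{equation*}
\frac{d}{dx}\bigl(g(x)\,e^{F(x)}\bigr) = \bigl(g'(x)+g(x)f(x)\bigr)e^{F(x)} = e^{F(x)} = \phi'(x).
\end{equation*}
Since $xg(x)>0$ for $x\neq 0$ and $g$ is analytic, necessarily $g(0)=0$, so both $g(x)e^{F(x)}$ and $\phi(x)$ vanish at $x=0$, and they therefore coincide on the whole neighborhood $J$.

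Next I would establish $X(x)^2 = \phi(x)^2$. Differentiating $\tfrac12 X(x)^2 = \int_0^x g(s)e^{2F(s)}\,ds$ gives $X(x)X'(x) = g(x)e^{2F(x)}$, and on the other hand, using the equality just proved,
\begin{equation*}
\frac{d}{dx}\Bigl(\tfrac12\,\phi(x)^2\Bigr) = \phi(x)\,\phi'(x) = g(x)e^{F(x)}\cdot e^{F(x)} = g(x)e^{2F(x)}.
\end{equation*}
Both antiderivatives vanish at $0$, hence $X(x)^2 = \phi(x)^2$ throughout $J$. The sign ambiguity in the implicit definition of $X$ is resolved by the convention that $X$ has the same sign as $x$ (which is forced by $xg(x)>0$ and the requirement $X(x)\phi(x)>0$ of Theorem~\ref{thmC}); since $\phi'(0)=1>0$ implies $\phi$ has the same sign as $x$, we conclude $X(x)=\phi(x)$, and all three quantities coincide.

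The step I expect to need the most care is precisely this sign bookkeeping: the function $X$ is defined only up to a sign by its square, and one must invoke the standing sign hypothesis $xg(x)>0$ (which also forces $g(0)=0$) to fix the correct branch and to verify the positivity requirement $X(x)\phi(x)>0$. Everything else is a one-line differentiation using $g'+gf=1$, so once the sign convention is pinned down the conclusion of Theorem~\ref{thmC} applies directly and yields both the isochronicity of $O$ and the vanishing of the Urabe function.
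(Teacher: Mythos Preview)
Your proposal is correct and is precisely the intended derivation: the paper presents this statement as a corollary of Theorem~\ref{thmC} (citing \cite{C3}) without giving a separate proof, and your argument---differentiating $g(x)e^{F(x)}$ using $g'+gf=1$ to obtain $\phi(x)=g(x)e^{F(x)}$, then matching $\tfrac12 X^2$ with $\tfrac12\phi^2$, and finally fixing the sign---is exactly how the corollary falls out of Theorem~\ref{thmC} with $h\equiv 0$. Your attention to the sign convention for $X$ is appropriate and is the only non-mechanical step.
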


\begin{theo}[Chouikha,\cite{C3}]\label{COR2}
Let $f$, $g$ be  functions analytic in a neighborhood $J$ of $0$, 
Consider the system~\eqref{L2P}  having a center at the origin $O$.
Let
  $$S(f,g) =  5g''^2(0) + 10g''(0)f(0) + 8f^2(0) - 3g'''(0) - 6f'(0).$$
 Then the following holds:\\
 (a)- $S(f,g) > 0$\ then the period function \ $T$\ increases in a neighborhood of $0$.\\
 (b)- $S(f,g) < 0$\ then the period function \ $T$\ decreases in a neighborhood of $0$.\\
 (c)- If~\eqref{L2P} has an isochronous center at $0$ then \  $S(f,g) = 0$.
 \end{theo}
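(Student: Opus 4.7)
The plan is to derive the Taylor expansion of the period function $T(\rho)$ in the amplitude $\rho$ of oscillation and to show that its coefficient $\tau_2$ of $\rho^2$ equals a positive constant times $S(f,g)$. Parts (a), (b), (c) then follow immediately from the sign of $\tau_2$.

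First I would exploit Sabatini's first integral (Theorem~\ref{SAB}). Setting $u:=X(x)$ and $v:=e^{F(x)}\dot x$, the integral reads $I=u^2+v^2$, so each orbit becomes a circle in the $(u,v)$-plane. In polar coordinates $u=\rho\cos\theta$, $v=\rho\sin\theta$, the identities $X(x)X'(x)=g(x)e^{2F(x)}$ (from the definition of $X$) together with a direct computation giving $\dot v=-g(x)e^{F(x)}$ from~\eqref{L2} yield
\begin{equation*}
\dot\theta=\frac{u\dot v-v\dot u}{\rho^2}=-\frac{g(x)\,e^{F(x)}}{X(x)},
\end{equation*}
where $x=x(\rho,\theta)$ is determined on the orbit by the relation $X(x)=\rho\cos\theta$.

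Next I would rescale time so that $g'(0)=1$ and expand everything as power series in $x$. Using the Taylor expansions of $f$, $g$, $F$, and hence $X$, then inverting $X(x)=\rho\cos\theta$ to express $x$ as a power series in $\rho$ with $\theta$-dependent coefficients, and finally substituting into $-1/\dot\theta$, one obtains an expansion
\begin{equation*}
T(\rho)=-\int_0^{2\pi}\frac{d\theta}{\dot\theta}=2\pi+\tau_1\rho+\tau_2\rho^2+O(\rho^3),
\end{equation*}
in which the odd coefficient $\tau_1$ vanishes by parity of its $\theta$-integrand. A direct evaluation of the remaining trigonometric moments identifies $\tau_2$ as a specific linear combination of the monomials $f(0)^2$, $f'(0)$, $f(0)g''(0)$, $g''(0)^2$, $g'''(0)$, and after collecting terms this combination is a positive multiple of $S(f,g)$.

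Since $T(\rho)-2\pi$ has the sign of $\tau_2$ for all sufficiently small $\rho>0$, statements (a) and (b) follow from the definition of local monotonicity. For (c), an isochronous center forces $T(\rho)\equiv 2\pi$ in a neighborhood of the origin, so in particular $\tau_2=0$ and hence $S(f,g)=0$. The main obstacle is purely the bookkeeping involved in carrying four Taylor series to the correct depth and collecting the trigonometric integrals; this is a finite, mechanical computation that can be automated with a computer algebra system, and it is indeed the first instance of the C-algorithm introduced in the following subsection.
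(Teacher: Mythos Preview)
The paper does not itself prove Theorem~\ref{COR2}; it is quoted from~\cite{C3}. The nearest thing to a derivation in the paper is the proof of Lemma~1, where three steps of the C-algorithm are carried out: one equates the expansions of $\tilde g(u)=X/(1+h(X))$ and $\tilde g(u)=g(x)e^{F(x)}$, eliminates the free Urabe coefficient $c_1$, and obtains $S(f,g)$ as the first genuine obstruction to $h$ being odd.

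Your argument is correct and computes the same quantity by a more classical and transparent route. In your $(u,v)$-coordinates the period integrand $-1/\dot\theta$ equals $X/(g\,e^{F})$, which is an analytic function of $X=\rho\cos\theta$ alone; writing it as $1+d_1X+d_2X^2+\cdots$ and integrating over $\theta$ kills the odd-degree terms and gives $\tau_2=\pi d_2$. The C-algorithm reaches the same $d_2$ dually: it \emph{assumes} $h$ odd, so that $X/(g\,e^{F})=1+h(X)$ would have no even part, and the degree-two failure of this (after eliminating $c_1$) is precisely $d_2$, hence a nonzero multiple of $S(f,g)$. Your presentation makes (a), (b), (c) immediate because $\tau_2$ is visibly the leading period coefficient; the C-algorithm framing, on the other hand, is what the paper needs later to generate the higher-order isochronicity conditions mechanically. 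The two are equivalent at order two, and your closing remark that this computation ``is indeed the first instance of the C-algorithm'' is exactly right.
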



\subsection{Chouikha algorithm }
The above Theorem~\ref{thmC} leads to an algorithm, first introduced by R. Chouikha in \cite{C3} (see also\cite{CRC}), in what follow called C-algorithm,
which allows to obtain necessary conditions for isochronicity
of the center at the origin for equation~\eqref{L2}.

Below we recall basic steps of the algorithm. 

Let $h$ be the function defined in the Theorem~\ref{thmC},  and $u ={\phi (x)}$.  We assume that function $\phi$ is invertible near the origin.
\begin{equation}
\label{tg}
	\tilde{g}(u) :=\frac{X}{1+h(X)},
\end{equation}
where now $X$ is considered as a function of $u$. Our further assumption is that  functions $f(x)$ and $g(x)$ depend polynomially on certain numbers of parameters $\alpha:=(\alpha_1,\ldots,\alpha_p)\in\R^p$.

By Theorem~\ref{thmC}, if the system~\eqref{L2} has isochronous center at the origin, then the function $h$ which is called the Urabe function, must be odd, so we have 
\begin{equation}
h(X)=\sum_{k=0}^{\infty}{c_{2k+1}} X^{2k+1},
\end{equation}
and moreover, 
\begin{equation}
\label{tgg}
\tilde{g}(u) = g(x)e^{F(x)}, \quad\text{where}\quad x=\phi^{-1}(u).	
\end{equation}
Hence, the right hand sides of~\eqref{tg} and~\eqref{tgg} must be equal. Hence, we expand the both right hand sides into the Taylor series around the origin and equate the corresponding coefficients. To this end we need to calculate $k$-th derivatives of~\eqref{tg} and~\eqref{tgg}.
  
For~\eqref{tg}, by straightforward differentiation, we have 
\begin{equation}\label{CR1}
\frac{d^k\tilde{g}(u)}{du^k} =\frac{d}{dX}\left(\frac{d^{k-1}\tilde{g}(u)}{du^{k-1}} \right) \frac{dX}{du}
\end{equation}
Using induction, one can show that for~\eqref{tgg} we obtain 
\begin{equation}
\label{CR2}\frac{d^k\tilde{g}(u)}{du^k} = e^{(1-k)F(x)}S_k(x),\end{equation}
where $S_k(x)$ is a function of $f(x), g(x)$ and their derivatives.

Therefore to compute the first $m$ conditions for isochronicity of system~\eqref{L2} we proceed as follows:
\begin{enumerate}
	\item we fix $m$ and write 
	\[
	h(X) =\sum_{k=1}^{m}{c_{2k-1}} X^{2k-1}+O(X^{2m}), \quad c:=(c_1, c_3, \ldots, c_{2m-1}),
\]
\item next, we compute 

	\[
	v_k :=\frac{d^k\tilde{g}}{du^k}(0), \quad w_k= S_k(0)
\]
 for $k = 1,\ldots, 2m + 1$. Note that those quantities are polynomials in $\alpha$ and $c$.
 \item by the Theorem~\ref{thmC} we obtain equations $v_k = w_k$ for $k = 1,\ldots, 2m + 1$. 
 
 It appears that we can always eliminate parameters $c$ from these equations. In effect we obtain a certain number of polynomial equations $s_1 = s_2 = s_3 = \ldots= s_M = 0$ with $p$ unknows $\alpha_i$. These equations gives $m$ necessary conditions for isochronicity of system~\eqref{L2}. 
\end{enumerate}
For more details see \cite{C3,CRC}.
\subsection{The choice of an appropriate Gr\"obner basis}

Let us consider the following system
 \begin{equation}\label{C_n} \qquad \left. \begin{array}{rl} \dot x&= - y+ b_{1,1}yx+\ldots+ b_{n-1,1}yx^{n-1}\\
 \dot y&= x + a_{2,0}x^2 +a_{0,2}y^2 +\ldots+ a_{n-2,2}x^{n-2}y^2+ a_{n,0}x^n\end{array}\right\} 
\end{equation}
which is reducible to the equation~\eqref{L2} with
\begin{equation}
	f(x) = \frac{a_{0,2}+b_{1,1} +\ldots+(a_{n-2,2}+(n-1)b_{n-1,1})x^{n-2}}{1-b_{1,1}x-\ldots- b_{n-1,1}x^{n-1}},
\end{equation}

\begin{equation}	
 g(x) = (x+ a_{2,0}x^2+\ldots + a_{n,0}x^n)(1-b_{1,1}x-\ldots- b_{n-1,1}x^{n-1}).
\end{equation}

In this paper we have investigated the two types of high degree polynomial perturbations, homogeneous and non-homogeneous ones.
 It seems that C-algorithm is efficient for computing isochronicity necessary conditions for higher degree homogeneous perturbations.
 In this case system~\eqref{C_n} reduces to the following one
 \begin{equation}\label{C_h} \qquad \left. \begin{array}{rl} \dot x&= - y+b_{k-1,1}yx^{k-1}\\
 \dot y&= x +a_{k-2,2}x^{k-2}y^2+ a_{k,0}x^k\end{array}\right\} 
\end{equation}
where $k\in \{ 2,\ldots,n\}$.

For this homogeneous polynomial perturbation of the linear center, C-algorithm generate homogeneous polynomial equations in the parameters 
$ a_{k-2,2},a_{k,0}$ and $b_{k-1,1}$.
Solving these polynomials, gives all the parameters values for which the real polynomial differential system~\eqref{C_h} is isochronous at the origin.

However, for the non-homogeneous perturbation case, C-algorithm generate non-homogeneous polynomial system.
Moreover, non-homogeneous perturbations depend on a bigger number of parameters.

We note that for $n=3$, C-algorithm succeeds to establish isochronicity criteria, however for $n=4$ the obtained polynomials from the algorithm are much more involved.
For example, for the system~\eqref{C_n} with $n=4$ reduces to
\begin{equation*}
 \left. \begin{split} \dot x&= - y+ b_{1,1}yx+ b_{2,1}yx^2 + b_{3,1}yx^3\\
 \dot y&= x + a_{2,0}x^2 + a_{3,0}x^3+ a_{0,2}y^2 + a_{1,2}xy^2 + a_{2,2}x^2y^2+ a_{4,0}x^4 \end{split}\right\} 
\end{equation*}
its associated first two non-zero  polynomials obtained by applying C-algorithm are the followings
\begin{equation}
\label{p3}
P_2=3\,b_{{2,1}}-3\,a_{{1,2}}+{b_{{1,1}}}^{2}-a_{{2,0}}b_{{1,1}}-9\,a_{{3,0
}}+4\,{a_{{0,2}}}^{2}-5\,b_{{1,1}}a_{{0,2}}+10\,{a_{{2,0}}}^{2}+10\,a_
{{2,0}}a_{{0,2}}, 
\end{equation}
\begin{equation*}
\begin{split}
P_3&=72\,{b_{{2,1}}}^{2}+396\,a_{{2,0}}b_{{1,1}}a_{{1,2}}+90\,b_{{1,1}}a_{{0
,2}}a_{{1,2}}+36\,b_{{1,1}}a_{{2,2}}+324\,b_{{3,1}}a_{{0,2}}\\
  & -36\,b_{{2
,1}}a_{{1,2}}-468\,a_{{2,0}}b_{{1,1}}b_{{2,1}}+612\,a_{{2,0}}b_{{2,1}}
a_{{0,2}}-4116\,b_{{1,1}}{a_{{2,0}}}^{2}a_{{0,2}}\\
& +108\,a_{{2,0}}b_{{3,
1}}
 -540\,a_{{3,0}}b_{{2,1}}-324\,a_{{4,0}}b_{{1,1}}+1566\,a_{{3,0}}b_{
{1,1}}a_{{0,2}}-288\,a_{{2,0}}a_{{2,2}}\\
&-459\,a_{{3,0}}{b_{{1,1}}}^{2}-
1296\,a_{{4,0}}a_{{0,2}}-306\,b_{{2,1}}b_{{1,1}}a_{{0,2}}+1428\,a_{{2,0
}}{b_{{1,1}}}^{2}a_{{0,2}}\\
&+153\,b_{{2,1}}{b_{{1,1}}}^{2}-117\,{b_{{1,1
}}}^{2}a_{{1,2}}-191\,a_{{2,0}}{b_{{1,1}}}^{3}+180
\,a_{{2,0}}a_{{0,2}}a_{{1,2}}
+43\,{b_{{1,1}}}^{4}\\
&-2319\,a_{{2,0}}b_{{1,1}}{a_{{0,2}}}^{2}-
289\,{b_{{1,1}}}^{3}a_{{0,2}}-360\,a_{{0,2}}a_{{2,2}}-36\,{a_{{1,2}}}^
{2}-171\,b_{{2,1}}{a_{{0,2}}}^{2}\\
&+513\,a_{{3,0}}{a_{{0,2}}}^{2}+537\,{
b_{{1,1}}}^{2}{a_{{0,2}}}^{2}+351\,{a_{{0,2}}}^{2}a_{{1,2}}-271\,b_{{1
,1}}{a_{{0,2}}}^{3}+542\,a_{{2,0}}{a_{{0,2}}}^{3}\\
&+756\,a_{{2,0}}a_{{3,0}}a_{{0,2}}+2268\,a_{{2,0}}a_{{3,0}}b_{{1,1}}-20\,{a_{{0,2}}}^{4}+1120
\,{a_{{2,0}}}^{4}+798\,{b_{{1,1}}}^{2}{a_{{2,0}}}^{2}\\
&-2240\,b_{{1,1}}{
a_{{2,0}}}^{3}-1512\,a_{{2,0}}a_{{4,0}}+1008\,{a_{{2,0}}}^{2}b_{{2,1}}
-252\,{a_{{2,0}}}^{2}a_{{1,2}}\\
&+1806\,{a_{{2,0}}}^{2}{a_{{0,2}}}^{2}+2240\,{a_{{2,0}}}^{3}a_{{0,2}}
\end{split}
\end{equation*}
To solve the first nine non-zero obtained polynomials requests high performance computer and the standard accessible computer algebra systems for solving polynomial equations are not able to find a solution. 

For  solving polynomial equations the  Gr\"obner bases are used. It is well known, see \cite{BW}, that the form and the size of the G\"obner basis of a polynomial ideal depends strongly on a  choice of monomial ordering. 
A bad choice of the monomial ordering can be a main reason why the Gr\"obner basis cannot be practically determined.

Our basic observation concerning algebraic structure of polynomial equations which give necessary conditions for the isochronicity in a case of non-homogeneous perturbations is following. Although the polynomials are not homogeneous, a careful analysis shows that they are quasi-homogeneous. 
In fact, one can notice that polynomial $P_2$ given by~\eqref{p3} is homogeneous if we give weight 2 for $b_{2,1}$, $a_{1,2}$ and $a_{3,0}$, and weight 1 for the remaining variables. More importantly we can find such a choice of weights for which all the obtained polynomials are homogeneous. 

The above observation shows that our main problem, i.e., finding a Gr\"obner basis, concerns as a matter of fact, finding a Gr\"obner basis of a homogeneous ideal. It is well know,  see  \cite[p.~466]{BW}, that homogeneous Gr\"obner bases have many 'nice' properties  which make them extremely useful  for solving  large and computationally demanding problems.

In fact, for non-homogeneous case of~\eqref{C_n}, the use of weighted degree gives a homogeneous Gr\"obner base.


To incorporate our observation into the C-algorithm we  choose a new parametrization for the problem.
First, we observe that all polynomials which are obtained by means of the C-algorithm are homogeneous if we choose the following weights 
\begin{enumerate}
	\item $i+j-1$ for parameters $a_{i,j}$ and $b_{i,j}$
	\item $2i+1$ for $c_{2i+1}$.
\end{enumerate}
Knowing this we introduced new parameters $A_{i,j}$, $B_{i,j}$, and $C_{2i+1}$ putting 
\begin{equation}
\label{ABC}
	A_{i,j}^{i+j-1}=a_{i,j}\quad B_{i,j}^{i+j-1}=b_{i,j}, \quad C_{2i+1}^{2i+1}=c_{2i+1}
\end{equation}
After this reparametrization system \eqref{C_n} reads
\begin{equation}\label{C_nh}
 \left. \begin{split} \dot x&= - y+ B_{1,1}yx+\ldots+ B_{n-1,1}^{n-1}yx^{n-1}\\
 \dot y&= x + A_{2,0}x^2 +A_{0,2}y^2 +\ldots+ A_{n-2,2}^{n-1}x^{n-2}y^2+ A_{n,0}^{n-1}x^n\end{split}\right\} 
\end{equation}
As in the case of isochronous center the Urabe function is odd, we search it under the form 
\begin{equation}
h(X)=\sum_{k=0}^{\infty}C_{2k+1}^{2k+1} X^{2k+1} = C_1 X + C_3^{3} X^3 +C_5^{5}X^5 +C_7^{7} X^7 +\ldots
\end{equation} 
We emphasize that from the isochronicity conditions for~\eqref{C_nh}, expressed in terms of its parameters,  it is easy to reconstruct the parameters values for which the system~\eqref{C_n} admits isochronous center at the origin, by a simply use of~\eqref{ABC}. 
%
%
%
%

Fact, that  the described reparametrization gives rise homogeneous equations, allows to reduce  the number of the parameters appearing in~\eqref{C_nh} by one. First, we assume  ${A_{2,0}}=0$, and then solve the isochronicity problem for system~\eqref{C_nh} under this assumption.
Next, for ${A_{2,0}}\neq0$, we apply on~\eqref{C_nh} the following change of coordinates
\begin{equation}
 (x,y) \mapsto\frac{1}{ A_{2,0}}( x,y)
\end{equation}
We  obtain
\begin{equation}
\left. 
\begin{array}{rl} \dot x &= - y +\left(\frac{B_{11}}{A_{2,0}}\right)xy +..+ \left(\frac{B_{n-1,1}}{A_{2,0}}\right)^{n-1}yx^{n-1}\\
 \dot y &= x + x^2 +\left( \frac{A_{0,2}}{A_{2,0}}\right)y^2  + ..+\left(\frac{A_{n-2,2}}{A_{2,0}}\right)^{n-1}x^{n-2}y^2+\left(\frac{A_{n,0}}{A_{2,0}}\right)^{n-1}x^n
 \end{array}\right\}
 \end{equation}
Hence, without loss of generality  we can put ${A_{2,0}}=1$, and find the parameters values for which the center is  isochronous. \\

We note that for an arbitrary  $k\in\N$, the problem of the isochonicity of the center for homogeneous perturbations  of the form~\eqref{C_h}  reduces to solve a number of polynomial equations in $3$ parameters.

 Recall that linear center perturbed by homogeneous polynomial, was investigated by W.S.~Loud in \cite{L} for the quadratic case, and in \cite{C3} the author find Loud results by the described algorithm, see also \cite{C1}.
 
Homogeneous perturbations was also studied by Chavarriga and coworkers.
For the fourth and fifth degree homogeneous perturbations, see \cite{CGG,CGG1},
where the homogeneous perturbations different from those studied in the present paper are considered.


%
%
%
%

\section{Fourth degree perturbations}

Let us consider the following system
 \begin{equation}\label{C_4} 
 \left. \begin{array}{rl} \dot x&= - y+ b_{1,1}yx+ b_{2,1}yx^2 + b_{3,1}yx^3\\
 \dot y&= x + a_{2,0}x^2 + a_{3,0}x^3+ a_{0,2}y^2 + a_{1,2}xy^2 + a_{2,2}x^2y^2+ a_{4,0}x^4 \end{array}\right\} 
\end{equation}
For the system~\eqref{C_4} having a center at the origin $O$, 
the following lemma gives a montonicity criteria for its  period function
\begin{lem}
Let
\begin{equation*}
S =10a_{2,0}^{2}+10a_{{2,0}}a_{{0,2}}-3a_{{1,2}}+3b_{{2,1}}-b_{{1,1}}a_{{2,0}}+b_{1,1}^{2}-9a_{{3,0}}+4a_{{0,2}}^{2}-5b_{{1,1}}a_{{0,2}}
\end{equation*}
 If $S > 0$ ($S < 0$) then the period function of~\eqref{C_4} is increasing (decreasing) at $O$.
 \end{lem}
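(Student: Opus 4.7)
The plan is to reduce the planar system~\eqref{C_4} to its associated Li\'enard form~\eqref{L2} and then apply Theorem~\ref{COR2} directly. Specifically, using the general formulas given in the paragraph preceding~\eqref{C_n}, for $n=4$ the system~\eqref{C_4} corresponds to
\begin{equation*}
f(x)=\frac{a_{0,2}+b_{1,1}+(a_{1,2}+2b_{2,1})x+(a_{2,2}+3b_{3,1})x^{2}}{1-b_{1,1}x-b_{2,1}x^{2}-b_{3,1}x^{3}},
\end{equation*}
\begin{equation*}
g(x)=(x+a_{2,0}x^{2}+a_{3,0}x^{3}+a_{4,0}x^{4})(1-b_{1,1}x-b_{2,1}x^{2}-b_{3,1}x^{3}).
\end{equation*}
So the first step is just to write these expressions down explicitly.

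Next I would extract the required Taylor coefficients at the origin. Writing $f=N/D$ with $D(0)=1$, a direct computation gives $f(0)=a_{0,2}+b_{1,1}$ and $f'(0)=a_{1,2}+2b_{2,1}+b_{1,1}a_{0,2}+b_{1,1}^{2}$. Expanding the product defining $g$ and reading off the coefficients yields $g''(0)=2(a_{2,0}-b_{1,1})$ and $g'''(0)=6(a_{3,0}-a_{2,0}b_{1,1}-b_{2,1})$. No terms involving $a_{4,0}$, $a_{2,2}$, $b_{3,1}$ enter because they only contribute to coefficients of order $x^{3}$ or higher in the numerator of $f$ and of order $x^{4}$ or higher in $g$, and these do not appear in the quantities $f(0),f'(0),g''(0),g'''(0)$.

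The third step is to substitute these four quantities into the invariant $S(f,g)$ of Theorem~\ref{COR2} and collect terms. The three quadratic pieces yield
\begin{align*}
(g''(0))^{2} &= 4a_{2,0}^{2}-8a_{2,0}b_{1,1}+4b_{1,1}^{2},\\
g''(0)f(0) &= 2a_{2,0}a_{0,2}+2a_{2,0}b_{1,1}-2b_{1,1}a_{0,2}-2b_{1,1}^{2},\\
f(0)^{2} &= a_{0,2}^{2}+2a_{0,2}b_{1,1}+b_{1,1}^{2},
\end{align*}
and the two linear ones contribute $-3g'''(0)=-18(a_{3,0}-a_{2,0}b_{1,1}-b_{2,1})$ and $-6f'(0)=-6(a_{1,2}+2b_{2,1}+b_{1,1}a_{0,2}+b_{1,1}^{2})$. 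Summing with the coefficients $5,10,8,1,1$ prescribed in Theorem~\ref{COR2} and simplifying produces exactly twice the polynomial $S$ stated in the lemma; since multiplication by a positive constant preserves the sign, the sign conditions of Theorem~\ref{COR2} transfer directly to $S$, which establishes the claim.

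The only real obstacle here is algebraic bookkeeping: making sure the cross terms in $b_{1,1}^{2}$ and $b_{1,1}a_{2,0}$ (which arise from all five contributions) cancel correctly to give the coefficients $+1$ and $-1$ in $S$. Once that verification is done, parts~(a) and~(b) of Theorem~\ref{COR2} immediately yield the stated monotonicity criteria.
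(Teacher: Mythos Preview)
Your proof is correct and follows essentially the same route as the paper: reduce \eqref{C_4} to its Li\'enard form and invoke Theorem~\ref{COR2}. The only procedural difference is that the paper obtains the polynomial $S$ by running three steps of the C-algorithm and eliminating $c_1$, whereas you plug the Taylor data $f(0),f'(0),g''(0),g'''(0)$ directly into the closed-form expression for $S(f,g)$ in Theorem~\ref{COR2}; your computation that $S(f,g)=2S$ is accurate and the conclusion follows identically.
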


\begin{proof}
System~\eqref{C_4} reduces to the Li\'enard type equation~\eqref{L2},
with
\begin{equation} 
 \left. \begin{split}f(x) &= \frac{a_{0,2}+b_{1,1} + (a_{1,2}+2b_{2,1})x +(a_{2,2} + 3b_{3,1})x^2}{1-b_{1,1}x-b_{2,1}x^2-b_{3,1}x^3}\\
 g(x) &= (x + a_{2,0} x^2+ a_{3,0} x^3 + a_{4,0}x^4)(1-b_{1,1}x-b_{2,1}x^2-b_{3,1}x^3)\end{split}\right\}
  \end{equation}
  
 then, we establish only three iterations (derivatives) of the C-algorithm given in Section~2, after elimination of $c_1$ of the Urabe function from the second derivatives, substitution in the third polynomial gives $S$.
Using the Theorem~\ref{COR2}, (See Corollary 2.8 of \cite{C3}), we prove the result.
\end{proof}

Because of computational complexity, we select for isochronicity investigation,  two sub-families of the system~\eqref{C_4}, which have the codimension two in the parameters space.

\subsection{First family}

Let us assume $b_{1,1}=a_{3,0}=0$, in this case~\eqref{C_4} reduces to the system

\begin{equation}\label{C4B}\left.\begin{array}{rl} \dot x& = - y + b_{2,1}x^2y +b_{3,1}yx^3\\
 \dot y &= x + a_{2,0}x^2 + a_{0,2}y^2  + a_{1,2}xy^2+a_{2,2}x^2y^2+a_{4,0}x^4 \end{array}\right\} \end{equation}
 
 For this system having a center at the origin $O$, we  give isochronicity necessary and sufficient conditions depending only on the following seven real parameters  $b_{2,1},b_{3,1}, a_{2,0}, a_{0,2}, a_{1,2},a_{2,2},a_{4,0}.$ 
 

\begin{theo} 
 The system~\eqref{C4B}  has an isochronous center at $O$ if and only if its parameters satisfy one of the folowing conditions
 \begin{description}
\item{1.} $a_{2,0}=a_{0,2}=b_{2,1}=a_{1,2}=0,\; b_{3,1}=-4 a_{4,0}/3,a_{2,2}=-16 a_{4,0}/3$
\item{2.} $ a_{2,0}=a_{0,2}=b_{2,1}= a_{1,2}=0, \; a_{4,0}=-b_{3,1}/2,\;a_{2,2}=b_{3,1}/2$
\item{3.} $ a_{2,0}=a_{4,0}=a_{0,2}=b_{2,1}=a_{1,2}=0,\; a_{2,2}=b_{3,1}$
\end{description}
which give the homogeneous perturbations and remaining ones which give the non-homogeneous :

\begin{description}
\item{4.} $ a_{2,0}=a_{4,0}=a_{0,2}=0,\; a_{2,2}=b_{3,1},\;b_{2,1}=a_{1,2}$
\item{5.} $a_{0,2}=-2a_{2,0},\; a_{1,2}=8a_{2,0}^2/3,\; a_{2,2}=-8a_{2,0}^3/3,$

$a_{4,0}=0, b_{3,1}=-4a_{2,0}^3/3,b_{2,1}=2a_{2,0}^2/3$

\item{6.} $a_{0,2}=-2a_{2,0},\; a_{1,2}=8a_{2,0}^2/3,\; a_{2,2}=-16a_{2,0}^3/21,$ 

$a_{4,0}=-4a_{2,0}^3/21,\;b_{3,1}=-4a_{2,0}^3/21,\;b_{2,1}=2a_{2,0}^2/3$

\item{7.} $a_{2,0}\neq0,\; a_{0,2}=-2a_{2,0},\; a_{4,0}=0,\;a_{2,2}=2 b_{3,1},$

$a_{1,2}=a_{2,0}^2(4+ b_{3,1}/a_{2,0}^3),\;  b_{2,1}=a_{2,0}^2(2+b_{3,1}/a_{2,0}^3)$
\end{description}
\end{theo}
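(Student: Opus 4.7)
My plan is to invoke the C-algorithm of Section~2 on the restricted family \eqref{C4B}. First, I would substitute $b_{1,1}=a_{3,0}=0$ into the Li\'enard-type reduction given in the proof of the preceding lemma, obtaining
\[
f(x)=\frac{a_{0,2}+(a_{1,2}+2b_{2,1})x+(a_{2,2}+3b_{3,1})x^2}{1-b_{2,1}x^2-b_{3,1}x^3},\quad g(x)=(x+a_{2,0}x^2+a_{4,0}x^4)(1-b_{2,1}x^2-b_{3,1}x^3).
\]
The center condition $xg(x)>0$ near $0$ is automatic in a neighborhood of the origin, so Theorem~\ref{SAB} ensures a center, and we are reduced to checking the isochronicity criterion~\eqref{CRI} of Theorem~\ref{thmC}. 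I would then iterate the C-algorithm, eliminating the unknown Urabe coefficients $c_{2k+1}$ successively to extract polynomial necessary conditions $s_1,\ldots,s_M=0$ in the seven remaining parameters.

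Second, to handle the resulting ideal I would apply the weighted reparametrization \eqref{ABC} (weight $i+j-1$ on $a_{i,j}$ and $b_{i,j}$) so that all the obtained polynomials become homogeneous in $A_{i,j},B_{i,j}$, as advocated in Section~2.3. Using the scaling $(x,y)\mapsto A_{2,0}^{-1}(x,y)$ one may assume $a_{2,0}\in\{0,1\}$, thereby eliminating one degree of freedom and splitting the problem into the two branches $a_{2,0}=0$ and $a_{2,0}\neq 0$. On each branch a homogeneous Gr\"obner basis can be computed in a reasonable time; by enumerating the associated variety I expect to recover exactly the seven cases listed, after un-scaling through \eqref{ABC}.

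The real content of the theorem, however, is sufficiency: the $M$ polynomial conditions only assert that the first $m$ Taylor coefficients of the Urabe candidate coincide, so for each family 1--7 one must actually certify isochronicity. For the three homogeneous families 1--3 I would exhibit an analytic linearizing change of coordinates $(u,v)=(x+\cdots,y+\cdots)$ reducing the system to $\dot u=-v,\ \dot v=u$, which by Poincar\'e's theorem proves isochronicity and simultaneously yields a first integral $u^2+v^2$ (as announced in the introduction). For cases 2 and 3 where $g$ and $f$ take a particularly simple form one may instead try to verify the pointwise identity $g'(x)+g(x)f(x)=1$ and conclude directly from Theorem~\ref{COR}. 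For the non-homogeneous families 4--7 I would solve the relation~\eqref{CRI} in closed form for the odd function $h$: substituting the specified parameter values into $g(x)e^{F(x)}$ and $X(x)$, the quotient $X/(1+h(X))$ becomes an explicit analytic expression, and checking that it admits an odd analytic $h$ with $X\phi>0$ completes the proof.

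The main obstacle will be sufficiency in the non-homogeneous cases, and in particular case~7, which still carries a free parameter $b_{3,1}$ in addition to $a_{2,0}$; closing a single linearization argument or a single Urabe identification uniformly in that parameter requires some care. The Gr\"obner step is standard once the weighted grading is in place, and the monotonicity criterion of the previous lemma can serve as a sanity check: every candidate isochronous family must satisfy $S=0$, and this can be verified on each line of the statement to confirm no case has been lost in the elimination.
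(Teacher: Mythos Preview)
Your plan for the necessary direction---C-algorithm, weighted homogenization via \eqref{ABC}, normalization $a_{2,0}\in\{0,1\}$, Gr\"obner basis---matches the paper exactly.

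For sufficiency the paper proceeds more simply than you propose, and your case-assignment is partly inverted. In six of the seven families (cases 1, 3, 4, 5, 6, 7) the paper does not construct a linearization or solve~\eqref{CRI} for $h$; it simply checks the pointwise identity $g'(x)+g(x)f(x)=1$ (after normalizing $a_{2,0}=1$ in the non-homogeneous cases) and invokes Theorem~\ref{COR}, so $h\equiv 0$ in all of these. Thus your plan to ``solve~\eqref{CRI} in closed form for the odd function $h$'' in cases 4--7 is unnecessary work: the Urabe function vanishes identically there, and verifying $g'+gf=1$ is a one-line check even for case~7 with its free parameter $b_{3,1}$. Conversely, your alternative suggestion that case~2 might satisfy $g'+gf=1$ is wrong: case~2 is precisely the one family with a genuinely nonzero Urabe function, and the paper identifies it explicitly as $h(X)=b_{3,1}X^{3}/\sqrt{4+b_{3,1}^{2}X^{6}}$ by recognizing the pattern in the computed coefficients $c_{3},c_{9},c_{15},\ldots$ and then verifying~\eqref{CRI} directly. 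The linearizing changes of coordinates you mention for cases 1--3 are indeed given in the paper, but only after the proof, as supplementary information rather than as the sufficiency argument.
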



In our investigations we have used {\it{Maple}} in its version 10. 
To compute the Gr\"obner basis of the obtained polynomial equations in the ring of characteristic $0$, we have used {\it{Salsa Software }} more precisely the {\it{FGb}} algorithm see \cite{F}.
 In this proof, we do not present the algorithm generated polynomials which are too long.
 \begin{proof}
We consider separately  the two cases : homogeneous  and non-homogeneous perturbations.

{\large Homogeneous perturbations}\\

In the homogeneous case system~\eqref{C4B} reduces to
\begin{equation}\label{H4}
\left.\begin{array}{rl} \dot x &= - y +b_{3,1}yx^3\\
 \dot y &= x +a_{2,2}x^2y^2+a_{4,0}x^4 \end{array}\right\} 
 \end{equation}
 
which is reducible to~\eqref{L2} such that

\begin{equation}
\left.\begin{split}
f(x) &= \frac {( 3 b_{3,1}+a_{2,2}) {x}^{2}}{1-b_{3,1}{x}^{3}}, \\
 g(x) &=( x-b_{3,1}{x}^{3} )(1+a_{4,0}{x}^{3})\end{split}\right\} 
 \end{equation}

 C-algorithm gives three homogeneous perturbations of linear center, candidate  to be isochronous.
 $19$ derivatives was essential to obtain the necessary conditions of isochronicity.
 We give explicitly the Urabe function $h$ associated to each system given by the algorithm.

Case~1: the system~\eqref{H4} becomes
\begin{equation}\label{S1}
\left.\begin{array}{rl} \dot x &= - y -\frac{4}{3}a_{4,0}yx^3\\
 \dot y &= x -\frac{16}{3} a_{4,0}x^2y^2+a_{4,0}x^4 \end{array}\right\}
 \end{equation}
 We can easily check that   $f(x)g(x) + g'(x)  =1 $, hence, following Corollary 2-7 of \cite{C3}, the system~\eqref{S1} is isochronous and $h(X)\equiv0.$\\
 
Case~2: the system~\eqref{H4} becomes

\begin{equation}\label{S2}  \left. \begin{split} \dot x &= - y +b_{3,1}yx^3 \\
 \dot y &= x +\frac{b_{3,1}}{2}x^2y^2-\frac{b_{3,1}}{2}x^4 \end{split}\right\} 
 \end{equation} 
The computations yield to the coefficients of the Urabe function :\\
\begin{equation*}\left. \begin{split}
c_1&=0,c_3=\frac{b_{3,1}}{2},c_5=0,c_7=0,\\ 
c_9&=-\frac{b_{3,1}^3}{16},c_{11}=0,c_{13}=0,c_{15}=\frac{3b_{3,1}^5}{256},\\
c_{17}&=0,c_{19}=0,c_{21}=-\frac{5b_{3,1}^7}{2048}. \end{split}\right.
 \end{equation*} 
Let $u_0=c_3,$ $u_1=c_9,$ $u_2=c_{15},$ $u_3=c_{21}\ldots$
We observe that for $k=1,2,3,4$ $$\frac{u_{k+1}}{u_k}=-\frac{b_{3,1}^2(k+\frac{1}{2})(k+1)}{4(k+1)^2}$$
It is then natural to conjecture that this is always the case.
 By series summation we found the odd  function 
 \begin{equation}\label{URABE}
 h(X)=\frac {b_{3,1} X^3}{\sqrt {4+{b_{3,1}}^{2}{X}^{6}}}=\sum_{k=1}^\infty u_k X^k
 \end{equation}
By direct computations we verify that the above $h$ satisfy the equation~\eqref{CRI}. Thus we conclude that the obtained $h$ is the Urabe function.

This case is similar to the one found in Theorem~3 of \cite{CRC}.\\

Case~3: the system~\eqref{H4} becomes

 \begin{equation}\label{S3} 
  \left. \begin{split} \dot x &= - y +a_{2,2}yx^3\\
 \dot y &= x +a_{2,2}x^2y^2 \end{split}\right\}
 \end{equation} 
 also this equation has an isochronous center at $O$ since 
  $f(x)g(x) + g'(x)  =1 $ and $ h(X)\equiv0.$\\
 
 We note that the case~3 is the case~4 with $a_{1,2}=0$.\\

{\large Non-homogeneous perturbations}\\

For the non-homogeneous perturbation of the linear center, we add to  C-algorithm the two tricks expanded in the last Section ( Homogenization and reduction of the dimension of the parameters space by 1 ).

We obtain,  the last four theorem cases~$4,\;5,\,6$ and $7$, when restricted to $a_{0,2}=1$,  satisfy   assumptions of Theorem~\ref{COR}, we show that each of the last four systems restricted to $a_{2,0}=1$ admit isochronous center at the origin and $ h=0$.
Finally, by the rescaling
\begin{equation}\label{RECI}
 (x,y) \mapsto a_{0,2}( x,y)
\end{equation}
we generalize the isochronicity for any $a_{2,0}\neq0$ since by change of coordinate~\eqref{RECI} the isochronicity is not lost.

\end{proof}

To complete the analysis, to each isochronous center  obtained by homogeneous perturnation, using Theorem~\ref{SAB} we write explicitely the first integrals and thanks to the method described in \cite{MRT} we compute the linearizing change of coordinates.


  A first integral of the system~\eqref{S1} is 
\begin{equation*}
 H_{\eqref{S1}}={\frac {\left(a_{4,0}^{2}{x}^{8}+2 a_{4,0}\,{x}^{5}+{x}^{2}+{y}^
{2}\right) ^3}{ 729\left( 3+4\, a_{4,0}\,{x}^{3} \right) ^{8}}}.\end{equation*}
The following analytic change of coordinates
\begin{equation*}
\left. \begin{split} u &={\frac {x \left( 1+ a_{4,0}\,{x}^{3} \right) }{3 \left( 3+4\, a_{4,0}\,{x}^{3} \right) ^{4/3}}}
\\
 v &={\frac {y}{ 3\left( 3+4\,{ a_{4,0}}\,{x}^{3} \right) ^{4/3}}} \end{split}\right\}\end{equation*}
 transforms the system~\eqref{S1} into the linear system :
 
\begin{equation*}\label{LIN}
\left. \begin{array}{rl} \dot u &= - v\\
 \dot v &= u \end{array}\right\}\end{equation*}
A first integral of system \eqref{S2} is
\begin{equation*}
H_{\eqref{S2}}={\frac {\left({x}^{2}+{y}^{2}\right)^3}{{b_{3,1}{x}^{3}-1}}},
\end{equation*}
then we give the linearizing change of coordinates 
 \begin{equation*}
\left. \begin{split} u &= {\frac {x}{\sqrt [6]{-1+b_{3,1}{x}^{3}}}}
\\
 v &={\frac {y}{\sqrt [6]{-1+ b_{3,1}{x}^{3}}}}\end{split}\right\}
 \end{equation*}
 A first integral of~\eqref{S3} is computed :
  \begin{equation*}
  H_{\eqref{S3}}={\frac {\left({x}^{2}+{y}^{2}\right)^3}{ \left( -1+{\it a_{2,2}}\,{x}^{3} \right) ^{2}}}
  \end{equation*}
  A linearizing change of coordinates is
  \begin{equation*}
\left. \begin{split} u &= {\frac {x}{\sqrt [3]{-1+{\it a_{2,2}}\,{x}^{3}}}}\\
 v &={\frac {y}{\sqrt [3]{-1+{\it a_{2,2}}\,{x}^{3}}}}\end{split}\right\}
 \end{equation*}

  For the remaining cases 3,4,5 and 6 obtained by non-homogeneous pertrbations, the first integrals and linearizing transformation can be obtained by the same way, but the computations are cumbersome and we dont report them there.
\subsection{Second family}

Consider system~\eqref{C_4}, with $b_{3,1}=b_{2,1}=0$.
We obtain the seven parameter real system of degree $4$.
\begin{equation}\label{C4B1}
\left. \begin{split} \dot x &= - y +b_{3,1}yx^3\\
 \dot y &= x + a_{2,0}x^2 + a_{0,2}y^2 + a_{3,0}x^3  + a_{1,2}xy^2+a_{2,2}x^2y^2+a_{4,0}x^4 \end{split}\right\} 
 \end{equation}
 For this system having a center at the origin $O$, we  give isochronicity necessary and sufficient conditions depending only on these seven real parameters  $a_{3,0},b_{3,1}, a_{2,0}, a_{0,2}, a_{1,2},a_{2,2},a_{4,0}$.
  
System~\eqref{C4B1} reduces to the equation~\eqref{L2}  with
\begin{equation*}
\left. \begin{split}f(x)& = \frac{a_{0,2} + a_{1,2} x +(a_{2,2} + 3b_{3,1})x^2}{1-b_{3,1}x^3},\\
 g(x)& = (x + a_{2,0} x^2+ a_{3,0} x^3  + a_{4,0}x^4)(1-b_{3,1}x^3)\end{split}\right\} 
 \end{equation*}
  

\bigskip
\begin{theo}
  The system~\eqref{C4B1}  has an isochronous center at $O$ if and only if its parameters satisfy one of the folowing conditions
\begin{description}
\item{1.} $a_{2,0}=a_{0,2}=b_{2,1}=a_{1,2}=0,\; b_{3,1}=-4 a_{4,0}/3,a_{2,2}=-16 a_{4,0}/3$
\item{2.} $ a_{2,0}=a_{0,2}=b_{2,1}= a_{1,2}=0, \; a_{4,0}=-b_{3,1}/2,\;a_{2,2}=b_{3,1}/2$
\item{3.} $ a_{2,0}=a_{4,0}=a_{0,2}=b_{2,1}=a_{1,2}=0,\; a_{2,2}=b_{3,1}$

\end{description}
which give the homogeneous perturbations and remaining ones which give the non-homogeneous
\begin{description}
\item{4.} $a_{2,0}=-a_{0,2}/2 ,\; a_{3,0}=a_{0,2}^2( 9-\sqrt {33})/48 ,\; a_{1,2}=a_{0,2}^{2}( -1+\sqrt {33})/{16}$ 
 
 $ a_{4,0}=0,\; a_{2,2}=a_{0,2}^{3}( -21+5\sqrt {33}) /{64},\; b_{3,1}=a_{0,2}^{3}( -21+5\,\sqrt {33} ) /192$
 
\item{5.} $a_{2,0}=-a_{0,2}/2 ,\; a_{3,0}={a_{0,2}}^2( 9+\sqrt {33})/{48} ,\; a_{1,2}=-a_{0,2}^{2}( 1+\sqrt {33})/16$
 
  $ a_{4,0}=0,\; a_{2,2}=-a_{0,2}^{3}( 21+5\sqrt {33}) /{64}, b_{3,1}=-a_{0,2}^{3}( 21+5\,\sqrt {33} ) /192$
  
\item{6.} $ a_{2,0}=-a_{0,2}/2,\;a_{3,0}=a_{4,0}=0,\;a_{1,2}=a_{0,2}^{2}/2$

$ a_{2,2}= a_{0,2}^{3}/2,\; b_{3,1}=a_{0,2}^{3}/4$

\item{7.} $a_{2,0}\neq0,\;a_{0,2}=-2a_{2,0},\; \frac{a_{3,0}}{{a_{2,0}}^2}={\frac {64}{117}}-{\frac {5717}{9975888}}\, \left( 22868+468\,\sqrt {3297} \right) ^{2/3}$

$+{\frac {1}{85264}}\,\sqrt {3297} \left( 22868+468
\,\sqrt {3297} \right) ^{2/3}
-{\frac {1}{117}}\,\sqrt [3]{22868+468\,
\sqrt {3297}},$

$\frac{a_{2,2}}{a_{2,0}^3}=-{\frac {344}{3549}}-{\frac {4720}{273}}\,{\frac {\sqrt {3297}}{ \left( 22868+468\,\sqrt {
3297} \right) ^{2/3}}}
+{\frac {96896}{3549}}\,{\frac {1}{\sqrt [3]{22868+468\,\sqrt {3297}}}}$

$+{\frac {32}{273}}\,{
\frac {\sqrt {3297}}{\sqrt [3]{22868+468\,\sqrt {3297}}}}-{\frac {
2695088}{3549}}\, \left( 22868+468\,\sqrt {3297} \right) ^{-2/3},
$

$a_{1,2}= a_{2,0}^2({\frac {-584+ \left( 22868+468\,\sqrt {3297} \right) ^{2/3}+14\,
\sqrt [3]{22868+468\,\sqrt {3297}}}{39\sqrt [3]{22868+468\,\sqrt {3297}}
}}),\;b_{3,1}=\frac{a_{2,2}}{4},$

$\frac{a_{4,0}}{a_{2,0}^3}={\frac {2150}{10647}}+{\frac {11926}{10647}}\,{\frac {1}{\sqrt [3]{22868+468\,\sqrt {3297}}}
}-{\frac {2}{91}}\,{\frac {\sqrt {3297}}{\sqrt [3]{22868+468\,\sqrt {
3297}}}}$

$-{\frac {1085248}{10647}}\, \left( 22868
+468\,\sqrt {3297} \right) ^{-2/3}-{\frac {160}{91}}\,{\frac {\sqrt {
3297}}{ \left( 22868+468\,\sqrt {3297} \right) ^{2/3}}}  $
 \end{description}
\end{theo}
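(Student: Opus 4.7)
The plan is to follow exactly the scheme laid out for the first family (Theorem above) but applied to the second family~\eqref{C4B1}. I would begin by recording the associated Liénard data $(f,g)$ (already displayed in the text), and then feed $(f,g)$ into the C-algorithm of Section 2. Since the Urabe function $h$ must be odd, writing $h(X)=\sum c_{2k+1}X^{2k+1}$ and equating the expansions of $\tilde g(u)$ obtained from~\eqref{tg} and~\eqref{tgg} yields, iteration by iteration, polynomial equations in the seven parameters and the coefficients $c_{2k+1}$. After enough iterations (empirically of the order of $15$--$20$ derivatives, as in the first family), eliminating the $c_{2k+1}$ produces a finite list $s_1=\dots=s_M=0$ of necessary polynomial conditions on $(a_{2,0},a_{3,0},a_{0,2},a_{1,2},a_{2,2},a_{4,0},b_{3,1})$.

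Next I would exploit the weighted-homogeneity trick of Section 2: reparametrize via~\eqref{ABC}, turning the $s_j$ into an honestly homogeneous ideal in the new variables $A_{i,j},B_{i,j}$. Following the same reduction used earlier, I would then split into the two cases $A_{2,0}=0$ and $A_{2,0}\neq 0$; in the latter the scaling $(x,y)\mapsto A_{2,0}^{-1}(x,y)$ lets me normalize $A_{2,0}=1$, thereby eliminating one parameter. In each case I would compute a Gröbner basis of the resulting homogeneous ideal using the FGb implementation \cite{F} (as in the previous section), choosing a monomial order adapted to the weighted grading so that the basis stays manageable. Solving the triangularized system by back-substitution then yields a finite list of candidate parameter vectors; projecting back to the original $(a_{i,j},b_{i,j})$ via~\eqref{ABC} gives precisely the seven cases stated in the theorem (three homogeneous ones, which coincide with those of the first family because setting $a_{3,0}=0$ makes~\eqref{C4B1} collapse to~\eqref{H4}, plus four genuinely new non-homogeneous families, including the intricate case~7 involving $\sqrt{3297}$ and a cubic radical).

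For sufficiency I would treat each case separately. For the three homogeneous cases, the argument is identical to that in the first family: cases 1 and 3 satisfy $g'(x)+f(x)g(x)=1$, so Theorem~\ref{COR} applies with $h\equiv 0$, while case 2 requires identifying the closed-form Urabe function~\eqref{URABE} and verifying~\eqref{CRI} directly. For the non-homogeneous cases 4--7, I would first check that when restricted to a normalization (say $a_{0,2}=1$ or $a_{2,0}=1$) each case yields $g'+fg=1$, so that Theorem~\ref{COR} again forces an isochronous center with $h=0$; then the rescaling $(x,y)\mapsto a_{0,2}(x,y)$ (or the analogue used in the first family) propagates isochronicity to the full parameter family.

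The main obstacle I expect is purely computational, and it has two facets: first, carrying enough derivatives of the C-algorithm so that the eliminated polynomial system actually cuts out the candidate set (the seventh case, with its nested radicals of discriminant $3297$, suggests that solutions of moderately high algebraic degree must appear, so early truncation would miss them); second, computing the Gröbner basis of the resulting ideal in seven (or after normalization, six) variables, which is where the weighted homogenization is essential---without it the FGb computation is not tractable on standard hardware. Verifying sufficiency, by contrast, reduces in every case to the mechanical check $g'+fg=1$ (or to identifying a single closed-form $h$), and is straightforward once the candidate list is in hand.
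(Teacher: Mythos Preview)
Your proposal is correct and follows essentially the same route as the paper: run the C-algorithm on the Li\'enard data for~\eqref{C4B1}, use the weighted-homogenization and one-parameter reduction of Section~2 together with FGb to extract the seven candidate families, defer the three homogeneous cases to the analysis already done for the first family (including the explicit Urabe function~\eqref{URABE} for case~2), and verify sufficiency for cases~4--7 by checking $g'+fg=1$ under the normalization $a_{0,2}=1$ (cases~4--6) or $a_{2,0}=1$ (case~7) and then rescaling. The only point worth sharpening is that the paper explicitly distinguishes the normalization used: $a_{0,2}=1$ for cases~4--6 versus $a_{2,0}=1$ for case~7, which you have already anticipated.
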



\begin{proof}

 The same method used in the last proof, is employed.
 We establish the seven cases given in the theorem.
 
 The three first cases  belong to the family of linear centers perturbed by fourth degree homogeneous polynomial. Those are  analyzed in the proof concerned by the first family .
 
For each of the cases~4, 5 and 6 of the theorem restricted to $a_{0,2}=1$, it is easy to verify that conditions of Theorem \ref{COR} are satisfied, also for the case 7 restricted to $a_{2,0}=1$ we check easily $f(x)g(x) + g'(x)  =1 $.
Then, by an appropriate change of coordinate, we show that the system~\eqref{C4B1} satisfying one of the cases~4, 5, 6 and 7  have isochronous center at the origin.
\end{proof}

%

Note that the polynomials issued from the $19$ derivations and associated eliminations for the system~\eqref{C_4} (with 9 parameters), exceed the authorized memory of ordinary computers (2 Go of Random Access Memory ) in computations of the Gr\"obner basis by the known efficient algorithm FGb \cite{F}.
\section{Fifth degree homogeneous perturbations}
Let us consider the following system of degree $5$.
\begin{equation}\label{CB5}
\left. \begin{split} \dot x &= - y +ayx^4\\
 \dot y &= x +bx^3y^2+cx^5 \end{split}\right\}\end{equation}
 The systems given in the following theorem are additional isochronous cases to those established by Chavarriga et al in \cite{CGG1}.
\begin{theo} 
System~\eqref{CB5} has an isochronous center at the origin if and only if it reduces to one of the following systems
 \begin{equation}\label{51}
 \left. \begin{split} \dot x &= - y +ayx^4\\
 \dot y &= x + 5 ax^3y^2-\frac{4}{5} a x^5 \end{split}\right\} \end{equation}
 \begin{equation}\label{52}
 \left. \begin{split} \dot x&= - y +ayx^4\\
 \dot y &= x +a x^3y^2 \end{split}\right\}\end{equation}
\end{theo}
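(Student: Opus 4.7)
The plan is to follow exactly the same strategy used in the fourth degree homogeneous analysis. First I rewrite system~\eqref{CB5} in Li\'enard form~\eqref{L2}, which (by the general formulas in Section~2.3, specialized to the nonzero coefficients $b_{4,1}=a$, $a_{3,2}=b$, $a_{5,0}=c$) yields
\begin{equation*}
f(x)=\frac{(b+4a)x^{3}}{1-ax^{4}},\qquad g(x)=(x+cx^{5})(1-ax^{4}).
\end{equation*}
The hypothesis $xg(x)>0$ near $0$ holds for $|x|$ small, so by Theorem~\ref{SAB} the origin is a center and Theorem~\ref{thmC} applies.

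Next I run the C-algorithm on this Li\'enard pair, producing successive polynomial obstructions $P_{1}=P_{2}=\cdots=0$ in the three real parameters $(a,b,c)$. A key observation is that, under the weight convention of Section~2.3, the weights of $a$, $b$, $c$ all equal $4$, so the resulting ideal is already homogeneous in the usual sense; this keeps the algebra small enough that a Gr\"obner basis (computed via FGb in Maple, as in the fourth degree case) terminates quickly on three unknowns. I would carry out enough derivatives to stabilize the basis and then solve the corresponding zero-dimensional (after dehomogenization) system; I expect the variety to split into exactly two irreducible components, namely $\{c=0,\ b=a\}$ and $\{b=5a,\ c=-\tfrac{4}{5}a\}$, corresponding to~\eqref{51} and~\eqref{52}.

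For sufficiency, both candidate systems satisfy the hypothesis $g'(x)+f(x)g(x)=1$ of Theorem~\ref{COR}, so each has an isochronous center at $O$ with associated Urabe function $h\equiv0$. Indeed, for~\eqref{52} one has $g(x)=x-ax^{5}$ and $f(x)g(x)=5ax^{4}$, giving $g'+fg=1$; for~\eqref{51} a direct expansion yields $g(x)=x-\tfrac{9a}{5}x^{5}+\tfrac{4a^{2}}{5}x^{9}$ and $f(x)g(x)=9ax^{4}-\tfrac{36a^{2}}{5}x^{8}$, and again $g'+fg=1$. This closes the equivalence.

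The only real obstacle I anticipate is computational: choosing enough iterations of the C-algorithm so that the Gr\"obner basis fully captures the variety without generating redundant high-degree relations. Because there are only three parameters of equal weight, this is considerably easier than the fourth degree non-homogeneous cases treated earlier in the paper, and no clever reparametrization beyond the weighted substitution~\eqref{ABC} should be necessary. If desired, one can also follow the last paragraphs of Section~3 to produce an explicit first integral via Theorem~\ref{SAB} and a linearizing change of coordinates via the method of~\cite{MRT} for each of~\eqref{51} and~\eqref{52}, but this is not needed for the theorem statement itself.
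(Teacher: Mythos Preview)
Your proposal is correct and follows essentially the same approach as the paper: reduce~\eqref{CB5} to the Li\'enard pair $f(x)=(b+4a)x^{3}/(1-ax^{4})$, $g(x)=(x+cx^{5})(1-ax^{4})$, run the C-algorithm to obtain the necessary conditions, and then verify sufficiency for~\eqref{51} and~\eqref{52} via Theorem~\ref{COR} by checking $g'(x)+f(x)g(x)=1$. In fact you supply more detail than the paper does (the explicit verification of $g'+fg=1$ and the homogeneity remark), but the strategy is identical.
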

\begin{proof}
We perform the C-algorithm with the functions $f$ and $g$ such that
\begin{equation*}
 \left. \begin{split} 
  f(x)&=\frac{(b+4a)x^3}{(1-ax^4)}\\
  g(x)&=(x+cx^5)(1-ax^4)\end{split}\right\}
  \end{equation*}
then we obtain  isochronicity necessary conditions for the system~\eqref{CB5}.%
%
For the two  systems~\eqref{51}  and~\eqref{52}, it is easy to check that assymptions of Theorem \ref{COR} are satisfied
then systems~\eqref{51} and~\eqref{52} are isochronous and their associated Urabe function $ h=0.$
\end{proof}

As before, for each obtained isochronous center, we write explicitely the first integrals and the linearizing change of coordinates.

For the system~\eqref{51} a first integral is \begin{equation*}H_{\eqref{51}}={\frac {\left( \left( -5+4\,a{x}^{4} \right) ^{2}{x}^{2}+25\,{y}^{2}\right) ^{2}}{625
 \left( -1+a{x}^{4} \right) ^{5}}}
\end{equation*}
We give the linearizing change of coordinates for the system~\eqref{51}
\begin{equation*}
\left. \begin{split} u &={\frac { \left( -5+4\,a{x}^{4} \right) x}{5 \left( -1+a{x}^{4}
 \right) ^{5/4}}}
\\
 v &={\frac {y}{ \left( -1+a{x}^{4} \right) ^{5/4}}}
\end{split}\right\}\end{equation*}

For the system~\eqref{52} a first integral is 
\begin{equation*}
H_{\eqref{52}}={\frac {\left({x}^{2}+{y}^{2}\right)^2}{{-1+a{x}^{4}}}}.
\end{equation*}
We give the linearizing change of coordinates associated to~\eqref{52}
\begin{equation*}
\left. \begin{split} u &={\frac {x}{\sqrt [4]{-1+a{x}^{4}}}}\\
                        v &={\frac {y}{\sqrt [4]{-1+a{x}^{4}}}}
\end{split}\right\}\end{equation*}

\section{The period function for an Abel polynomial system}

This section is concerned with the following Abel system
\begin{equation}\label{Ab}
\left. \begin{split}
\dot x&=-y\\ \displaystyle\dot y&=\displaystyle\sum_{k=0}^{n} P_k (x){y}^k
    \end{split}\right\} 
    \end{equation}
where  $P_k(x):= a_k x$, $ a_0:=1$, and $a_k \in \R$, for $k=1,\ldots, n$.

  \subsection{Reduction to Li\'enard type system}
The system~\eqref{Ab} can be written

\begin{equation}\label{I}
\left.\begin{split} 
\dot x&=-y\\
 \dot y&=x(1+P(y)),
      \end{split}\right\} \end{equation}
with $ P(y)=a_1 y + a_2 y^2+a_3 y^3+....+a_n y^n.$  
 
Let define the functions $X$  and $\phi$ as follow
\begin{equation*}
\frac {1}{2} X(x)^2 =: \int_0^x \frac{s}{(1+P(s))} ds\quad\text{and}\quad\phi(x) =: \int_0^x\frac{ds}{(1+P(s))}.
\end{equation*} 
 We show the following.
\begin{theo}The origin $O$ is a center for~\eqref{I}.

Moreover, if $a_1^{2}-3 a_2 > 0$ $(a_1^{2}-3 a_2 < 0)$ then the period function of~\eqref{I} is increasing (decreasing ) at $O$.

 This center at $O$, is isochronous if and only if  there exists an odd function $h$  
satisfying
\begin{equation*}
\frac {X}{1+h(X)} = x
\end{equation*} 
and
\begin{equation*}\phi(x)= X(x) + \int_0^{X(x)} h(t) dt\end{equation*} 
such that  $X\phi(x) > 0$ for $x\neq 0$.\\
 
In particular, when $P$ is an even polynomial then the origin is isochronous center if and only if $P=0$
\footnote{The last statement of the above theorem was communicated to the autor by A.R.Chouikha.}.
 \end{theo}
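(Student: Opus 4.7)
The overall plan is to reduce the Abel system \eqref{I} to a Li\'enard-type equation of the form \eqref{L2}, and then to invoke Sabatini's theorem \ref{SAB}, Chouikha's monotonicity corollary \ref{COR2}, and the main Chouikha theorem \ref{thmC}.

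The key substitution is the following: eliminating $y$ directly via $y=-\dot x$ destroys the Li\'enard structure, because $P(-\dot x)$ contributes all powers of $\dot x$, not only $\dot x^{2}$. Instead I would swap the roles of $x$ and $y$ and introduce $u:=y$ together with the new ``momentum'' $W:=x(1+P(y))$. A direct computation then yields
\begin{equation*}
\dot u = W, \qquad \dot W = -u(1+P(u)) + \frac{P'(u)}{1+P(u)}\,W^{2},
\end{equation*}
which is the planar form associated to \eqref{L2} with
\begin{equation*}
f(u)=-\frac{P'(u)}{1+P(u)},\qquad g(u)=u\bigl(1+P(u)\bigr).
\end{equation*}
The Jacobian of $(x,y)\mapsto(u,W)$ at the origin has determinant $-(1+P(0))=-1$, so the map is a local diffeomorphism and preserves both the period function and isochronicity. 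With these $f,g$ one computes $F(u)=-\log(1+P(u))$, hence $e^{F(u)}=(1+P(u))^{-1}$, and the functions $\phi$ and $X$ constructed from $f,g$ via Section~2 coincide with the ones defined directly from $P$ in the statement of the theorem.

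The center assertion then follows from Theorem~\ref{SAB}, since $ug(u)=u^{2}(1+P(u))>0$ in a neighborhood of $0$. For the monotonicity claim I would apply Theorem~\ref{COR2}: a Taylor expansion gives $g''(0)=2a_{1}$, $g'''(0)=6a_{2}$, $f(0)=-a_{1}$, $f'(0)=a_{1}^{2}-2a_{2}$, and substitution into the expression for $S(f,g)$ yields $S = 2(a_{1}^{2}-3a_{2})$, whose sign matches that of $a_{1}^{2}-3a_{2}$.

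The main isochronicity equivalence is then a direct translation of Theorem~\ref{thmC}: since $g(u)\,e^{F(u)}=u(1+P(u))\cdot(1+P(u))^{-1}=u$, the Urabe relation \eqref{CRI} reduces to $X/(1+h(X))=x$, while \eqref{bb} is unchanged. For the concluding ``in particular'' statement, if $P$ is an even polynomial then $1+P$ is even, hence $X$ and $\phi$ are odd functions of $u$. Rewriting the Urabe relation as $u + u\,h(X(u)) = X(u)$ and comparing parities (with $h$ odd and $X$ odd, the summand $u\,h(X(u))$ is an even function of $u$, while $u$ and $X(u)$ are odd) forces the even part to vanish, so $h\equiv 0$, hence $X(u)\equiv u$; differentiating the identity $u^{2}=\int_{0}^{u}2s/(1+P(s))\,ds$ then yields $P\equiv 0$, and the converse is trivial. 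I expect the main technical obstacle to be the discovery of the Li\'enard-reducing substitution; once that is in place, the remaining assertions follow mechanically from the theorems collected in Section~2.
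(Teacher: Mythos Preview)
Your proof is correct and follows essentially the same route as the paper: the same change of variables $(x,y)\mapsto(y,\,x(1+P(y)))$ to Li\'enard form, the same identification of $f$, $g$, $F$, $\phi$, and the same key observation that $g(u)e^{F(u)}=u$ reduces Chouikha's criterion~\eqref{CRI} to $X/(1+h(X))=x$. The only cosmetic differences are that the paper establishes the center property by the symmetry of \eqref{I} in $x\mapsto -x$ rather than via Theorem~\ref{SAB}, obtains $a_1^{2}-3a_2$ through three steps of the C-algorithm rather than by your direct substitution into $S(f,g)$, and for even $P$ invokes the ``$f,g$ odd'' clause of Theorem~\ref{thmC} directly instead of your explicit parity-splitting argument---but these amount to the same computations.
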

\begin{proof}
From the symmetry criteria,  the origin is a center for~\eqref{I}, see \cite[chap.4]{C}. 
 We see that when $P(0)=0,$ there exists an open connected interval $J_1$
   containing $0$ where $1+P(y)\neq0$. Then  we can consider system in a neighborhood $U$ of the origin where  $U=J_1\times J_2$ with $J_2$ a suitable open interval containing $0$.
      So,  making the following transformation :
\begin{equation}\label{RES}
\left.\begin{split} 
y&=y\\z&=x(1+P(y))
      \end{split}\right\} \end{equation} 
we obtain, 
\begin{equation*}
\left.\begin{split} 
\dot y&=z\\ \dot z&=-y(1+P(y))+z^2 \frac{P'(y)}{(1+P(y))}
      \end{split}\right\}
       \end{equation*}
In effect, 
after renaming $y$ as $x$ and $z$ as $y$ we obtain the~\eqref{L2}.
The origin $O$ is a center for~\eqref{L2} with 
 \begin{equation*}
 \left. \begin{split}f(x)=-\frac{P'(x))}{(1+P(x))}\\g(x)=x(1+P(x))\end{split}\right\}
 \end{equation*}
For a fixed $n>2$, we establish  three iterations  of the C-algorithm given in Section~2, after elimination of $c_1$ of the Urabe function from the second derivation, substitution in the third polynomial gives  $a_1^{2}-3 a_2$.
By Using the Theorem~\ref{COR2} we prove the monotonicity result.

Let 
     \begin{equation*}
\left.\begin{split}F(x)& = \int_0^x f(s) ds=-\ln(1+P(x)),\\
  \phi(x)&=u=\int_0^x e^{F(s)} ds=\int_0^x\frac{ds}{(1+P(s))}\end{split}\right\} \end{equation*}
   Then we obtain
   \begin{equation*} \tilde g(u) = g(x) e^{F(x)}=x(1+P(x))e^{-\ln(1+P(x))}=x\end{equation*}  
  Following Theorem~\ref{thmC}, $\tilde g$ satisfies
   $$\tilde g(u) =  \frac {X}{1+h(X)}$$
   where $u = X + H(X)$.
    
   For the particular case when $P$ is even, it is easy to see that $f$ and $g$ are odd.
   We use the Theorem~\ref{thmC}, that leads us to obtain $x=X$ and $P\equiv0$.
   \end{proof}

The following paragraph is devoted to illustrate the last theorem by example.

 \subsection{Application to Volokitin and Ivanov system}

This section concerns the  Abel polynomial system 
\begin{equation}\label{ABN}
  \left. \begin{split}
\dot x&=-y\\ \dot y&=\sum_{i=0}^nP_i (x)y^i
    \end{split}\right\}\end{equation}
with $P_k\in\R[x]$, $0\leq k \leq n$.  

For such system the origin $O$ is not always a center.

\begin{theo}[Volokitin and Ivanov,~\cite{VI}]
The polynomial Abel system
\begin{equation}\label{VIG}
  \left. \begin{split}
\dot x&=-y\\ \dot y&=(x+a^2x^3)(1+h(x)y)^3+3axy(1+h(x)y)^2-h'(x)y^3
    \end{split}\right\}
    \end{equation}
    with an arbitrary number $a\in \R$ and an arbitrary polynomial $h\in\R[x]$ has an isochronous center at $O$.
\end{theo}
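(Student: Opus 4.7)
The plan is to reduce the general case with arbitrary polynomial $h$ to the special case $h\equiv0$ via a birational substitution and a time reparameterization, and then to treat the $h\equiv0$ case separately.

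First I would introduce $Y:=y/(1+h(x)y)$, which is a local diffeomorphism near the origin. A direct computation using $\dot x=-y$ gives
$$\dot Y=\frac{\dot y+h'(x)y^3}{(1+h(x)y)^2},$$
and combined with the specific form of $\dot y$ in the Volokitin--Ivanov system, the $h'y^3$ term cancels, so
$\dot Y=(x+a^2x^3)(1+h(x)y)+3axy$. Using the inverse relations $y=Y/(1-h(x)Y)$ and $1+h(x)y=1/(1-h(x)Y)$, this becomes
$$\dot x=-\frac{Y}{1-h(x)Y},\qquad \dot Y=\frac{x(1+a^2x^2)+3axY}{1-h(x)Y}.$$
Rescaling time by $ds=dt/(1-h(x)Y)$, the system in $(x,Y,s)$-variables becomes exactly the VI system with $h\equiv0$:
$$\frac{dx}{ds}=-Y,\qquad \frac{dY}{ds}=x+a^2x^3+3axY.$$

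Next I would compare the periods. Since $dx=-Y\,ds$, we have $h(x)Y\,ds=-h(x)\,dx$, and hence
$$dt=(1-h(x)Y)\,ds=ds+h(x)\,dx.$$
Integrating over a periodic orbit $\gamma$,
$$T_t(\gamma)=\oint_\gamma dt=\oint_\gamma ds+\oint_\gamma h(x)\,dx=T_s(\gamma),$$
the second integral being zero because $h(x)\,dx$ is an exact $1$-form (any antiderivative of $h$ returns to its initial value along a closed orbit). Thus isochronicity of the full VI system is equivalent to isochronicity of its $h\equiv0$ reduction.

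It remains to prove that $\dot x=-y$, $\dot y=x+a^2x^3+3axy$ has an isochronous center at the origin for every $a\in\R$. Introducing $v:=y+(3a/2)x^2$ eliminates the cross term in $\dot y$ and yields
$$\dot x=-v+\tfrac{3a}{2}x^2,\qquad \dot v=x(1+a^2x^2).$$
From here I would look for an explicit linearizing change of variables adapted to the factor $1+a^2x^2$, or alternatively verify isochronicity by computing all Poincar\'e--Lyapunov constants; the cubic normal-form calculation already yields vanishing first Lyapunov quantity, but all higher focal values must also be checked. I expect this last step to be the principal obstacle: producing the linearizing map of Poincar\'e's theorem explicitly (or, equivalently, a polynomial/rational first integral) requires a well-chosen ansatz, whereas a direct induction on the Lyapunov quantities is in principle finite but potentially delicate at each order.
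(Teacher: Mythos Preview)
The paper does not prove this theorem at all: it is quoted from \cite{VI}, and the only hint about method is the later remark that Volokitin and Ivanov establish isochronicity of \eqref{Ab3} ``showing that it commutes with some transversal polynomial system.'' So there is no in-paper proof to compare against; the original argument is the commuting-systems one.

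Your reduction from arbitrary $h$ to $h\equiv 0$ is correct and is a genuinely different route. The substitution $Y=y/(1+h(x)y)$ and the identity $dt=ds+h(x)\,dx$ are both right, and the vanishing of $\oint h(x)\,dx$ over a closed orbit gives the period comparison cleanly. This is more elementary than invoking a transversal commuting field, and it explains structurally why the polynomial $h$ is ``for free''.

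The gap is the one you yourself flag: you stop before settling the $h\equiv 0$ case
\[
\dot x=-y,\qquad \dot y=x+a^2x^3+3axy,
\]
and you describe it as ``the principal obstacle.'' It is not. Your shift $v=y+\tfrac{3a}{2}x^2$ is the wrong constant; take instead $v=y+ax^2$. Then
\[
\dot x=-v+ax^2,\qquad \dot v=x(1+av),
\]
and for $\zeta:=x+iv$ one gets $\dot\zeta=(i+ax)\zeta$. Since $ax$ is real, $\dfrac{d}{dt}\arg\zeta=\operatorname{Im}(\dot\zeta/\zeta)=1$, so every closed orbit has period $2\pi$. (That the origin is a center follows from the reversibility $(x,y,t)\mapsto(-x,y,-t)$, or simply from your own period argument, which already presupposes closed orbits of the full system and hence of the reduced one.) With this one-line computation your proof is complete and stands as a self-contained alternative to the commuting-systems argument.
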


 Let  us consider the Abel system~\eqref{Ab} with $n=9$ :
\begin{equation}\label{AB9}
  \left. \begin{split}
\dot x&=-y\\ \dot y&=x+ \sum_{i=1}^9a_i x y^i
    \end{split}\right\}\end{equation}
with  $ a_k \in\mathbb R$, $1\leq k \leq 9$.
As follows from the symmetry criteria,  the origin $O$ is always a center for~\eqref{AB9}, see \cite[chap.4]{C}.
\begin{theo}\label{ISL}
Only in the case 

 \begin{equation}\label{I3}
\left. \begin{split}
\dot x&=-y\\ \dot y&=x+ a_1 xy+ \frac{a_1^2}{3} x{y}^2+ \frac{a_1^3}{27} x{y}^3
    \end{split}\right\} \end{equation} 
   the system~\eqref{AB9} has an isochronous center at the origin $O$.
   That is, only in the case when the system~\eqref{AB9} belongs to the Volokitin-Ivanov class~\eqref{VIG}.
\end{theo}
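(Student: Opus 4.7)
The plan is to combine the Liénard reduction from the preceding theorem of this section with the C-algorithm and a Gröbner-basis analysis, and then identify the surviving candidate as a special case of the Volokitin-Ivanov family \eqref{VIG}.

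First, via the substitution \eqref{RES} I rewrite \eqref{AB9} as the Liénard type equation \eqref{L2} with
\begin{equation*}
f(x) = -\frac{P'(x)}{1+P(x)}, \qquad g(x) = x(1+P(x)), \qquad P(y) = \sum_{i=1}^{9} a_i y^i.
\end{equation*}
The origin is a center by the symmetry argument used in the preceding theorem. Moreover, as noted there, $g(x) e^{F(x)} = x$, so the function $\tilde g$ of \eqref{tg}--\eqref{tgg} equals $x$ viewed as a function of $u = \phi(x)$, which simplifies the C-algorithm output considerably.

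Next, I run the C-algorithm on this parametric pair $(f,g)$ with $a_1, \ldots, a_9$ as unknowns. Assigning the weight $i$ to $a_i$ (consistent with the scheme of Section~2), the polynomial obstructions obtained after eliminating the Urabe coefficients $c_{2k+1}$ are quasi-homogeneous. Performing enough iterations to detect obstructions up to order nine produces a finite system $s_1 = \cdots = s_M = 0$ in $(a_1, \ldots, a_9)$. I then split into the two cases $a_1 = 0$ and $a_1 \neq 0$; in the latter, the rescaling $(x,y) \mapsto (x/a_1, y/a_1)$ sets $a_1 = 1$ and reduces the number of free parameters by one. Computing a Gröbner basis with a weight-adapted graded ordering using FGb \cite{F}, the basis should triangulate, forcing $a_4 = a_5 = \cdots = a_9 = 0$ together with $a_2 = a_1^2/3$ and $a_3 = a_1^3/27$, and leaving \eqref{I3} as the only candidate.

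Finally, I verify that \eqref{I3} is isochronous by inspection: its right-hand side factors as $\dot y = x\,(1 + \tfrac{a_1}{3} y)^3$, which is precisely \eqref{VIG} for $a = 0$ and the constant polynomial $h(x) \equiv a_1/3$, so the Volokitin-Ivanov theorem applies. The main obstacle is the Gröbner-basis step: with nine parameters the polynomial swell is substantially larger than in the eight-parameter problems treated in Section~3, and staying within realistic time and memory bounds almost certainly requires both the weight-homogenization trick and the case-splitting on $a_1$ described above; if the full ideal remains intractable, one may further specialize parameters incrementally (for instance, first fixing $a_9 = 0$ versus $a_9 \neq 0$ and iterating) to drive the computation to completion.
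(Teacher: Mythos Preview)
Your proposal is correct and, for the necessity direction, follows the paper exactly: reduce \eqref{AB9} to the Li\'enard form via \eqref{RES}, run the C-algorithm on $f(x)=-P'(x)/(1+P(x))$, $g(x)=x(1+P(x))$, and eliminate all candidates except \eqref{I3}. Your added detail about quasi-homogeneous weights (weight $i$ for $a_i$) and the rescaling to normalize $a_1$ is entirely in the spirit of Section~2.3 and is a reasonable way to make the Gr\"obner computation tractable; the paper's own proof simply reports that the C-algorithm returns the unique family \eqref{I3} without spelling out these computational maneuvers.

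The one genuine difference is in the sufficiency argument. You verify that \eqref{I3} is isochronous by factoring $\dot y = x(1+\tfrac{a_1}{3}y)^3$ and recognizing this as the Volokitin--Ivanov system \eqref{VIG} with $a=0$ and $h(x)\equiv a_1/3$, then invoking their theorem. The paper instead stays within the Urabe framework: it computes the Urabe function directly and finds $h(X)=-\tfrac{a_1}{3}X$, so that Theorem~\ref{thmC} applies; the identification with \eqref{VIG} is made only afterwards, outside the proof, via the rescaling to \eqref{Ab3}. Your route is slightly cleaner as a sufficiency proof (one citation instead of a verification of \eqref{CRI}), while the paper's route has the advantage of exhibiting the explicit Urabe function and placing \eqref{I3} alongside the other cases (e.g.\ \eqref{S2}) where $h$ has the form $k_1X/\sqrt{k_2^2+k_3X^2}$.
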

\begin{proof}
We perform C-algorithm  with
\begin{equation*}
\left. \begin{split}
f(x)&=-\frac{P'(x)}{(1+P(x))}\\ g(x)&=x(1+P(x))
    \end{split}\right\} \end{equation*} 
where $P(x)=\sum_{i=1}^9a_i x^i$.
We obtain the unique one-parameter family~\eqref{I3},
    and computations gives the Urabe function 
    \begin{equation*} h(X) = -\frac{a_1X}{3} =\frac{k_1 X}{\sqrt{{k_2}^2+k_3 X^2}}
    \end{equation*}
  with $k_1=-a_1/3,\; k_2=1,\; k_3=0$.
  
  It is interesting to note that this Urabe function is of the same nature that the Urabe function~\eqref{URABE} of the system~\eqref{S2}.
  (cf. also the proof of the Lemma~3.4 of \cite{C3} as well as Section~3 and 4 of \cite{CRC}.)
\end{proof}
We transform  system~\eqref{I3},  by the following change of coordinates 
 \begin{equation*}\left. \begin{split}
 \xi&=\frac{a_{1} x}{3}\\  \zeta &=\frac{a_{1} y}{3}.
    \end{split}\right\} \end{equation*} 
Then, after renaming $\xi$ as $x$ and $\zeta$ as $y$ we obtain the Volokitin and Ivanov system \cite[p.24]{VI}, which is a particular case of \eqref{VIG}
 \begin{equation}\label{Ab3}\left.\begin{split}
\dot x&=-y\\ \dot y &=x(1+y)^3.
    \end{split}\right\}  \end{equation}
In \cite{VI}, they prove that $O$ is an isochronous center of~\eqref{Ab3} showing that it commutes with some transversal polynomial system, but dont provide its first integral.

Thanks to our rescaling~\eqref{RES}, we determine a first integral of~\eqref{Ab3}
\begin{equation*} 
I_{\eqref{Ab3}}(x,y)=x^2+\frac{y^2}{(1+y)^2}\
\end{equation*}
 Indeed, let $P(x)=3 x+ 3  x^2+  x^3$. The system~\eqref{Ab3} reduces to the system~\eqref{L2P} with
\begin{equation*} 
\left.\begin{split}
f(x)&=-\frac{P'(x))}{(1+P(x))}\\
g(x)&=x(1+P(x)) .\end{split}\right\}
       \end{equation*}
 By Theorem \ref{SAB} we have
 \begin{equation*}
  I(x,\dot x)=2\int_0^x g(s) e^{2F(s)} ds+(\dot x e^{F(x)})^2.
    \end{equation*} 
is a first integral of~\eqref{L2P} which, in our case, reduces  to
\begin{equation*}
I(x,y)=\frac{x^2}{(1+x)^2}+\frac{y^2}{(1+x)^6}
\end{equation*}
By the reciprocal of the rescaling~\eqref{RES}, we obtain the first integral of the system~\eqref{Ab3}
\begin{equation*}
I_{\eqref{Ab3}}(x,y)=x^2+\frac{y^2}{(1+y)^2}.
\end{equation*}

Unfortunately we are unable to find explicitely the linerizing transformation for the system~\eqref{Ab3}.

In the light of Theorem~\ref{ISL},  it is natural to ask if the system~\eqref{I3} is the unique system with isochronous center at the origin inside the  family ~\eqref{Ab}.

Even for the system~\eqref{Ab} whith $n=10$, our actual computer possibilies are not sufficient to give an answer.


 \begin{center}
 {\it{Acknowledgments}} 
 \end{center}
 
 I would like to thank A.Raouf Chouikha  (University Paris 13, France) who introduced me to the subject,  proposed me the problem studied in this paper and also for many important hints and discussions.
 
 I thank  Jean Marie Strelcyn (University of Rouen, France) for many useful discussions and for help in the final redaction of this paper.
 
 
My thank's to Magalie Bardet (University of Rouen, France) who explained me the homogenization in practical use of the Gr\"obner basis method.
  \newpage

  \newpage

\end{document}